\newtheorem{theorem}{Theorem}[section]
\newtheorem{lemma}[theorem]{Lemma}
\newtheorem{proposition}[theorem]{Proposition}
\newtheorem{corollary}[theorem]{Corollary}
\newtheorem{example}[theorem]{Example}
\newcommand{\Z}{\mathbb{Z}}
\definecolor{myYellow}{rgb}{1.0, 0.87, 0.0}
\definecolor{myYtable}{rgb}{1.0, 0.75, 0.0}
\definecolor{umass}{rgb}{0.004, 0.3215, 0.5568}
\definecolor{etonblue}{rgb}{0.59, 0.78, 0.64}
 \definecolor{mycolor}{rgb}{0.12, 0.3, 0.17}
\definecolor{myblue}{rgb}{0.03, 0.27, 0.49}
\definecolor{mediumjunglegreen}{rgb}{0.11, 0.21, 0.18}
\definecolor{asparagus}{rgb}{0.53, 0.66, 0.42}
\definecolor{goldenpoppy}{rgb}{0.99, 0.76, 0.0}
\definecolor{lincolngreen}{rgb}{0.11, 0.35, 0.02}
\definecolor{red(ncs)}{rgb}{0.77, 0.01, 0.2}
\definecolor{darkorange}{rgb}{1.0, 0.55, 0.0}
\definecolor{safetyorange(blazeorange)}{rgb}{1.0, 0.4, 0.0}
\definecolor{smokeytopaz}{rgb}{0.58, 0.25, 0.03}
\definecolor{brightgreen}{rgb}{0.4, 1.0, 0.0}
\definecolor{tue}{rgb}{0.82, 0.14, 0.14}
\newcommand{\ep}{\textcolor{black}}
\newcommand{\fr}{\textcolor{black}}
\newcommand{\rf}{\textcolor{black}}
\journal{arXiv}
\begin{document}

\begin{frontmatter}


	\title{Zero patterns in multi-way binary contingency tables with uniform margins}
	%
	%
    \author[PT]{Roberto Fontana}
    \author[TUe]{Elisa Perrone}
    \author[UG]{Fabio Rapallo}
	%
	%

    \address[PT]{
		Department of Mathematical Sciences, Politecnico di Torino, Corso Duca degli Abruzzi 24, 10129 Torino, Italy}
    \address[TUe]{Department of Mathematics and Computer Science, Eindhoven University of Technology, Groene Loper 3, 5612 AE Eindhoven, The Netherlands}
    \address[UG]{
        Department of Economics, Universit\`{a} di Genova, via Vivaldi 5, 16126 Genova, Italy}
\begin{abstract}
We study the problem of transforming a multi-way contingency table into an equivalent table with uniform margins and same dependence structure. This is an old question which relates to recent advances in copula modeling for discrete random vectors. In this work, we focus on multi-way binary tables and develop novel theory to show how the zero patterns affect the existence of the transformation as well as its statistical interpretability in terms of dependence structure. \ep{The implementation of the theory relies on combinatorial and linear programming techniques, which can also be applied to arbitrary multi-way tables. In addition, we investigate which odds ratios characterize the unique solution in relation to specific zero patterns.} Several examples are described to illustrate the approach and point to interesting future research directions.
\end{abstract}
\begin{keyword}
Categorical data analysis, \ep{conditional} odds ratios, discrete copulas, multivariate Bernoulli, Iterative Proportional Fitting.
\end{keyword}

\end{frontmatter}

\section{Introduction}
\label{Sec:Intro}
Largely employed across numerous domains such as healthcare, biology, and social sciences, contingency tables serve to display data in tabular format.
Contingency tables have been extensively analyzed within the field of statistics, primarily with the objective of developing methods to understand the dependence between variables (see, for example, \cite{rudas2018lectures}).
Recent work presented in \cite{geenens2020copula} highlights fascinating connections between the analysis of two-way contingency tables and copulas.
The essence of copula theory is centered on distinguishing between the influence of separate variables and their mutual dependency within the model. 
This separation facilitates the use of \emph{ad hoc} dependence modeling techniques by transforming the initial joint probability distribution into one with uniform margins on $[0,1]$, known as the copula. When the marginal distributions are continuous, the copula associated with the original distribution is uniquely defined (\cite{sklar_59}).
For discrete random variables, it is not possible to transform marginal distributions into uniform distributions using the Probability Integral Transform (PIT). The PIT identifies the copula solely within a specific subdomain. As a result, numerous copula models align with this specific subdomain.
The adaptation of copula theory to utilize an analogous notion for establishing a margin-free model within a discrete framework has been investigated in \cite{geenens2020copula,kojadinovic2024}.
In that study and the referenced literature, the authors explore the concept of transforming a given two-way contingency table into a new one that has uniform margins. 
This type of transformation clarifies the underlying relationships in the table, which could be obscured by notably uneven margins, as explained in the example below.

\begin{table}[b!]
    \centering
\normalsize
    \begin{tabular}{cc|c|c}
 \hline
    Vaccination  ($X_1$) & Recovery ($X_2$)& \quad $\Tilde{n}$ \quad  & \quad  $\Tilde{p}$ \quad \\
    \hline
no & no & 274 & 0.058 \\
no & yes & 278 & 0.059 \\
yes & no & 200 & 0.043 \\
yes & yes & 3951 & 0.840 \\
\hline \\
    \end{tabular}
    \caption{Sheffield smallpox epidemic reported in \cite{yule1912methods}.  \rf{The values of $\Tilde{p}$ are rounded to 3 decimal places.}}
    \label{tab:id7}
\end{table}

\smallskip
\noindent \textbf{{Motivating example.}} For $2 \times 2$ tables, the process entails converting the original table into a new one in which each marginal probability equals $1/2$, indicating a uniform distribution across the subdomain.
\rf{In Table \ref{tab:id7}}, we provide a classic instance from \cite{yule1912methods} that demonstrates smallpox cases documented at Sheffield Hospital. The dataset categorizes patients based on their vaccination status (yes or no) and their recovery outcome (yes or no).
The odds ratio, frequently used to evaluate associations in contingency table analysis, is notable with a value of $19.47$. Despite this, the unevenness of the marginals might obscure the link within the original dataset. The connection becomes apparent once the data is altered to show marginal probabilities of $1/2$, as illustrated in Fig.~\ref{fig:sheffield_tikz}~(b), while maintaining an odds ratio of~$19.47$.
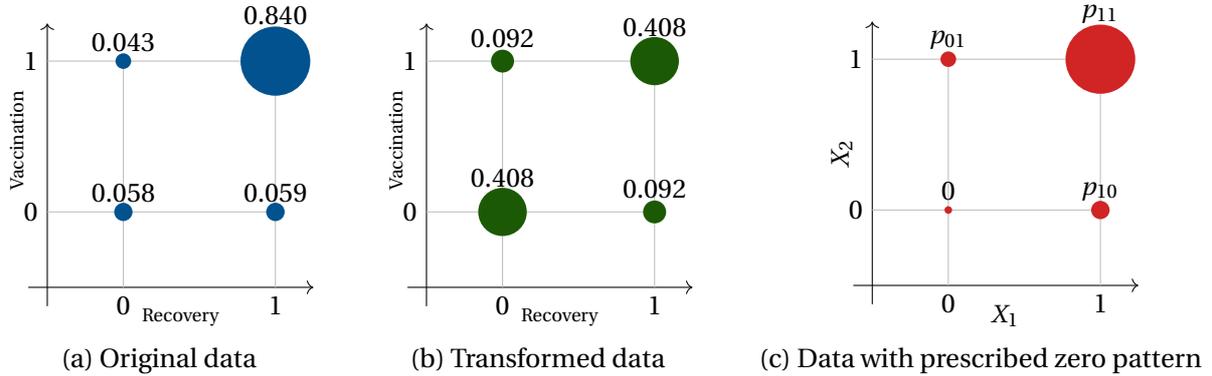
\begin{figure}[t!]
\centering
\begin{tabular}{ccccc}
\begin{tikzpicture}[scale=0.5]
\def\gridstep{2} 
\def\bubbleopacity{0.4} 
\def\bubblecolor{umass} 

\draw[->] (-0.5,0) -- (7,0);
\draw[->] (0,-0.5) -- (0,7);
\draw[gray!50, thin] (2,0) -- (2,6); 
\draw[gray!50, thin] (6,0) -- (6,6); 
\draw[gray!50, thin] (0,2) -- (6,2); 
\draw[gray!50, thin] (0,6) -- (6,6); 
\node[below] at (\gridstep,0) {0};
\node[below] at (3*\gridstep,0) {1};
\node[left] at (0,\gridstep) {0};
\node[left] at (0,3*\gridstep) {1};

\fill[\bubblecolor, opacity=\bubbleopacity] (1*\gridstep,1*\gridstep) circle[radius=0.241];
\node at (1*\gridstep,1*\gridstep+0.5) {0.058};

\fill[\bubblecolor, opacity=\bubbleopacity] (3*\gridstep,1*\gridstep) circle[radius=0.243];
\node at (3*\gridstep,1*\gridstep+0.5) {0.059};

\fill[\bubblecolor, opacity=\bubbleopacity] (1*\gridstep,3*\gridstep) circle[radius=0.206];
\node at (1*\gridstep,3*\gridstep+0.5) {0.043};

\fill[\bubblecolor, opacity=\bubbleopacity] (3*\gridstep,3*\gridstep) circle[radius=0.917];
\node at (3*\gridstep,3*\gridstep+1.2) {0.840};

\node at (3.5,-0.8) {\scriptsize{Recovery}};
\node[rotate=90] at (-0.8,3.9) {\scriptsize{Vaccination}};

\end{tikzpicture} & \qquad &
\begin{tikzpicture}[scale=0.5]
\def\gridstep{2} 
\def\bubbleopacity{0.4} 
\def\bubblecolor{lincolngreen} 

\draw[->] (-0.5,0) -- (7,0);
\draw[->] (0,-0.5) -- (0,7);
\draw[gray!50, thin] (2,0) -- (2,6); 
\draw[gray!50, thin] (6,0) -- (6,6); 
\draw[gray!50, thin] (0,2) -- (6,2); 
\draw[gray!50, thin] (0,6) -- (6,6); 
\node[below] at (\gridstep,0) {0};
\node[below] at (3*\gridstep,0) {1};
\node[left] at (0,\gridstep) {0};
\node[left] at (0,3*\gridstep) {1};

\fill[\bubblecolor, opacity=\bubbleopacity] (1*\gridstep,1*\gridstep) circle[radius=0.639];
\node at (1*\gridstep,1*\gridstep+0.9) {0.408};

\fill[\bubblecolor, opacity=\bubbleopacity] (3*\gridstep,1*\gridstep) circle[radius=0.303];
\node at (3*\gridstep,1*\gridstep+0.6) {0.092};

\fill[\bubblecolor, opacity=\bubbleopacity] (1*\gridstep,3*\gridstep) circle[radius=0.303];
\node at (1*\gridstep,3*\gridstep+0.6) {0.092};

\fill[\bubblecolor, opacity=\bubbleopacity] (3*\gridstep,3*\gridstep) circle[radius=0.639];
\node at (3*\gridstep,3*\gridstep+0.9) {0.408};

\node at (3.5,-0.8) {\scriptsize{Recovery}};
\node[rotate=90] at (-0.8,3.9) {\scriptsize{Vaccination}};

\end{tikzpicture} & \quad & \begin{tikzpicture}[scale=0.5]
\def\gridstep{2} 
\def\bubbleopacity{0.4} 
\def\bubblecolor{tue} 

\draw[->] (-0.5,0) -- (7,0);
\draw[->] (0,-0.5) -- (0,7);
\draw[gray!50, thin] (2,0) -- (2,6); 
\draw[gray!50, thin] (6,0) -- (6,6); 
\draw[gray!50, thin] (0,2) -- (6,2); 
\draw[gray!50, thin] (0,6) -- (6,6); 
\node[below] at (\gridstep,0) {0};
\node[below] at (3*\gridstep,0) {1};
\node[left] at (0,\gridstep) {0};
\node[left] at (0,3*\gridstep) {1};

\fill[\bubblecolor, opacity=\bubbleopacity] (1*\gridstep,1*\gridstep) circle[radius=0.1];
\node at (1*\gridstep,1*\gridstep+0.5) {0};

\fill[\bubblecolor, opacity=\bubbleopacity] (3*\gridstep,1*\gridstep) circle[radius=0.243];
\node at (3*\gridstep,1*\gridstep+0.5) {$p_{10}$};

\fill[\bubblecolor, opacity=\bubbleopacity] (1*\gridstep,3*\gridstep) circle[radius=0.206];
\node at (1*\gridstep,3*\gridstep+0.5) {$p_{01}$};

\fill[\bubblecolor, opacity=\bubbleopacity] (3*\gridstep,3*\gridstep) circle[radius=0.917];
\node at (3*\gridstep,3*\gridstep+1.2) {$p_{11}$};

\node at (3.5,-0.8) {$X_1$};
\node[rotate=90] at (-0.8,3.5) {$X_2$};

\end{tikzpicture}
\\
(a) Original data  & \qquad & (b) Transformed data & \qquad & (c) Data with prescribed zero pattern\\
\end{tabular}
\caption{Bubble plot of original data (a) and transformed data (b) for the Sheffield smallpox epidemic data in Table~\ref{tab:id7}. Part (c) represents a possible zero pattern in the observed table that motivates the investigations in this paper.}
\label{fig:sheffield_tikz}
\end{figure}

\ep{The transformed table also has an interpretation in terms of the information projection (I-projection) of the original probability table on the space of tables with uniform margins and prescribed support (see, e.g., \cite{geenens2020copula, kojadinovic2024}). In fact, this is the closest element to the original table with respect to the Kullback-Leibler divergence, and as such it is unique \cite{Csiszar_1975}. 
The uniqueness of the transformation is guaranteed only if a table with fixed margins and zero-pattern exists. However, this might not be the case when the original table contains zero entries.}
To clarify this, we consider a modification of the above example where the probability mass $p_{00}$ is zero. A generic table with this structure is represented in Fig.~\ref{fig:sheffield_tikz}~(c). 
It is straightforward to check that a table that preserves such a zero-structure and has uniform margins does not exist. Indeed, if such a table would exist, it would be the solution of the following system of equations
\begin{equation}
\left\{
\begin{aligned}
p_{00} +p_{01} +p_{10} +p_{11} &=1\\
p_{00} +p_{01} - p_{10}  - p_{11} &=0\\
p_{00} - p_{01} +p_{10} - p_{11} &=0 \\
p_{00} &= 0
\end{aligned}
\right. .
\label{eq:1}
\end{equation}
However, the system in Eq.~\eqref{eq:1} has no solutions unless $p_{11}$ is also set to zero, which means the original zero pattern is not maintained. 

\smallskip
\noindent \textbf{{Research problem and paper's contribution.}} The situation described in Fig.~\ref{fig:sheffield_tikz}~(c) is not artificial as zero entries are often observed in real datasets. 
Being an issue in applications, the problem of zero entries in contingency table analysis has caught the attention of researchers, especially in Algebraic Statistics, see for instance \cite[Chap.~9]{sullivant2018}. 
When working with sampling distributions, zeros can appear both as structural zeros and sampling zeros, a far more common situation than the structural zeros: As the number of cells increases in multi-way tables, so does the probability of sampling zeros even when the sample size is moderate or large.
In both cases, some of the \ep{conditional} odds ratios become zero or undefined, leading to the removal of the corresponding odds ratio equation in the transformation.
As a consequence, \ep{conditional} odds ratios become uninformative of the relationship between the variables and \ep{new constraints on higher order combinations of the conditional odds ratios must be added to characterize the transformed table that preserves the original odds ratio structure.} 
%

In this work, we further develop the initial investigations of \cite{Perrone2024, Fontana2023} to \ep{address the case of zeros in binary multi-way contingency tables.}
Our \emph{problem statement} is the following: Given a table $T$ (a probability table or a contingency table of observed frequencies), find a table $\tilde T$ with the uniform margins and the same odds ratios structure as $T$.
\ep{It is known that such a table $\tilde T$ is unique and can be find through scaling algorithms if and only if there exists a table with same zero pattern of $T$ and uniform margins (e.g., \cite{Csiszar_1975}). However, a characterization of the existence of $\tilde{T}$ in terms of possible zero patterns is not available in dimension $d>2$. In this paper, we provide conditions to ensure the existence of such a $\tilde T$.}
\ep{Under existence, we further investigate which combinations of conditional odds ratios are required to derive the unique solution in relation to the corresponding zero pattern.} 

The paper is organized as follows: In Section~\ref{Sec:background}, we describe the mathematical context of this work, introduce the notation, and connect our research to the state-of-the-art. 
Section~\ref{Sec:zero-existence} presents novel theoretical results on the existence of a transformed table with certain zero patterns. 
In Section~\ref{Sec:zero-uniqueness}, we show that in some degenerate cases, there might be a wider class of tables that satisfy the \ep{conditional} odds ratio constraints and have uniform margins. \ep{Therefore, higher order combinations of the primary conditional odds ratios are needed to obtain the unique transformation.}
We conclude the paper with a discussion in Section~\ref{Sec:discussion}.

\section{Background, notation, and relevance to the state-of-the-art}
\label{Sec:background}

In this work, we consider the case of $d$-dimensional multi-way binary contingency tables with zero entries and study how the presence of zeros affects the possibility of transforming a given table into one with uniform margins. 
In particular, we analyze all possible zero patterns where a zero pattern represents the set of the table cells whose corresponding frequencies are zero. 

Here, we consider $d$ binary factors, $X_1,\ldots,X_d$, i.e., a $d$-dimensional random vector $(X_1,\ldots,X_d)$. 
We refer to a specific cell of a table $T$ with the binary vector $\alpha=(\alpha_1,\ldots,\alpha_d) \in \{0,1\}^d$. 
We assume the set $\{0,1\}^d$ in lexicographic order and then we can refer to the $k$-th cell of $T$, $k=1,\ldots, 2^d$. We observe that the $k$-th cell of a table is the one corresponding to \rf{$\alpha_k=((\alpha_{k})_1, \ldots, (\alpha_{k})_d)$ such that $\sum_{j=1}^{d} (\alpha_{k})_j 2^{d-j}=k-1, \; k=1,\ldots,2^d$}, i.e., $\alpha_k$ is the binary representation of $k-1$.
For each cell $\alpha$, $\alpha \in \{0,1\}^d$, we define $p_\alpha$, the relative frequency of the cell. Thus, a table $T$ is described by the $2^d$-vector $p=(p_\alpha, \alpha \in \{0,1\}^d)$.
In the $d$-dimensional case, there are \rf{$2^{2^d}$} possible zero patterns, including the $2^d$-zero-pattern that results in a trivial all-zero table.  We can represent each zero pattern $\mathcal{Z}$ with a binary $2^d$ vector, i.e., $\mathcal{Z}=(z_1,\ldots,z_{2^d})$, where $z_i=0$ ($z_i=1$) indicates that the corresponding cell $\alpha_{i}$  has $p_{\alpha_{i}}=0$ ($p_{\alpha_{i}}>0$, respectively). 
We write that a zero pattern $\mathcal{Z}$ is a $k$-zero pattern if the number of zeros in $\mathcal{Z}$ is $k$, with $k=0,\ldots,2^d$.
Finally, we say that a table $T$ with uniform margin is $\mathcal{Z}$-compatible if the corresponding probability mass function (pmf) $p$ satisfies that $p_{\alpha_{i}}=0$ ($p_{\alpha_{i}}>0$) when $z_i=0$ ($z_i=1$, respectively), $i=1,\ldots,2^d$.

We now introduce the notion of conditional odds ratios, which is key to this work. In particular, we define the conditional odds ratios constraints on each $2 \times 2$ sub-tables by fixing $d-2$ variables. When $d=2$, there is only one odds ratio given by
\[
\omega^{12} =\frac{{p_{11}}p_{00}}{{p_{10}}p_{01}}.
\]
In the case $d=3$ there are six \ep{conditional} odds ratios given by
\begin{equation} \label{orsys}
\begin{split}
\omega_{0}^{23}= \frac {p_{000}p_{011}} {{p_{001}p_{010}}} \qquad \omega_{1}^{23}=\frac {p_{100}p_{111}} {{p_{101}p_{110}}}
\\
\omega_{0}^{13}=\frac {p_{000}p_{101}} {{p_{001}p_{100}}} \qquad \omega_{1}^{13}= \frac {p_{010}p_{111}} {{p_{011}p_{110}}}
\\
\omega_{0}^{12}=\frac {p_{000}p_{110}} {{p_{010}p_{100}}} \qquad 
\omega_{1}^{12}=\frac {p_{001}p_{111}} {{p_{011}p_{101}}}
\end{split} 
\end{equation}
The notation $\omega_{0}^{23}$ denotes the \ep{conditional} odds ratio for the variables $X_2$ and $X_3$ given the value $0$ for $X_1$. The conditions in Eq.~\eqref{orsys} are not independent, and only four of them are independent.

In arbitrary dimension $d>3$, the generic conditional odds ratio is defined in the same way: for each pair of variables $X_i$ and $X_j$ we consider the conditional odds ratio for given values of the other $(d-2)$ variables. If we denote with $\alpha'\in \{0,1\}^{d-2}$ the values of the fixed $(d-2)$ variables, the \ep{conditional} odds ratio is denoted as $\omega_{\alpha'}^{ij}$ and defined by: 
\begin{equation} \label{oreq}
\omega_{\alpha'}^{ij} =\frac{{p_{\alpha_1}}p_{\alpha_2}}{{p_{\alpha_3}}p_{\alpha_4}},
\end{equation}
where $\alpha_1, \alpha_2, \alpha_3, \alpha_4$ are equal to $\alpha'\in \{0,1\}^{d-2}$ in the entries $\{1, \ldots, d\} - \{i,j\}$ and are equal to $(1,1),(0,0),$ $(1,0),(0,1)$ respectively in the entries $(i,j)$. 
\rf{An example in $d=4$ is
\[
\omega_{01}^{13} =\frac{{p_{0001}}p_{1011}}{{p_{0011}}p_{1001}}.
\]}
In the $d$-way case there are $\binom{d}{2}2^{d-2}$ equations, which are not independent. \ep{In the remainder of the paper, we refer to the conditional odds ratios defined in Eq.~(\ref{oreq}) simply as \textit{odds ratios}.}
In the following, we draw connections with previous research on discrete copulas and classical results for log-linear models.

\subsection{Table transformations and discrete copulas}
In this section, we related our research problems to the theory of discrete copulas. For the sake of clarity, we first restrict to the two-dimensional case.
We consider $R\in \Z_{>0}$ and denote $I_R=\{ 0, 1/R, \ldots, (R-1)/R, 1 \}$, $[R] = \{1,\ldots,R\}$, and $\langle R\rangle = \{0,\ldots,R\}$.
Given $R$ and $S$ as positive integers, we define $U_R=\{u_0=0, u_1, \ldots, u_{R-1}, u_R=1\}$, with $u_0 < \ldots < u_R$, and $V_S=\{v_0=0, v_1, \ldots, v_{S-1}, v_S=1\}$, with $v_0 < \ldots < v_S$, as two discrete grid partitions over the unit interval. A discrete copula $C_{U_R,V_S}$ is defined on the set $U_R \times V_S$ and retains the characteristic properties of a copula function over the grid domain $U_R \times V_S$.
As noted in \cite{Perrone2021,Perrone2022}, interesting relationships exist between the domain of discrete copulas and certain convex polytopes known as \emph{transportation polytopes}. 
These polytopes are also associated with the contingency table analysis and the existence of a table with prescribed margins \citep{DeLoera_14}.
Namely, considering two vectors $\tilde{u}=(\tilde{u}_1, \ldots, \tilde{u}_R) \in \mathbb{R}^R_{>0}$ and $\tilde{v}=(\tilde{v}_1, \ldots, \tilde{v}_S) \in \mathbb{R}^S_{>0}$, we can define the transportation polytope $\mathcal{T}(\tilde{u},\tilde{v})$ as the convex polytope in the $R S$ variables $x_{i,j}$ satisfying,  for all $i\in[R]$ and $j\in[S]$, the following conditions:
$
x_{i,j}\geq0,
\quad
\sum_{h=1}^Sx_{i,h} = \tilde{u}_i,
\quad
\sum_{\ell=1}^Rx_{\ell,j} = \tilde{v}_j.
$
The two vectors $\tilde{u}$ and $\tilde{v}$ are called the margins of $\mathcal{T}(\tilde{u},\tilde{v})$. 
According to \cite{Perrone_2019}, each discrete copula $C_{U_R,V_S}$ is associated with a matrix in a transportation polytope $\mathcal{T}(\tilde{u},\tilde{v})$, and conversely.
The transportation matrix is connected to the probability mass function of the discrete random vector, whereas the associated discrete copula corresponds to the cumulative distribution function.

We now show how to derive the discrete copula associated with a given contingency table.
We consider the motivating example presented in Table~\ref{tab:id7}. There, we get $N=4703$ total observations. In this example, $R=S=2$, the vectors $\tilde{u}$ and $\tilde{v}$ are the margins of the contingency table, i.e., $\tilde{u}=(552, 4151)$ and $\tilde{v}=(474,4229)$, while the defining grids of the corresponding discrete copula are $U_2=\frac{1}{N}\{0,\tilde{u}_1,\tilde{u}_1+\tilde{u}_2\}=\{0, 0.12,1\}$ and $V_2=\{0,0.1,1\}$.
The entries of the discrete copula $C_1 = C_{U_2,V_2}=(c_{i,j})$, $i \in [2]$ and $j \in [2]$ are computed from the entries of the contingency table $(x_{i,j})$ by summing up and normalizing, i.e., $c_{i,j} = \frac{1}{N} \sum_{\ell =1}^i \sum_{h=1}^j x_{\ell,h}$, while $c_{0,0}=c_{i,0}=c_{0,j}=0$, for $i \in [2]$ and $j \in [2]$. Namely, $C_1$ is as follows:
\[
C_1=\begin{pmatrix} \begin{array}{ccccc}
                       0 & 0.00 & 0.00  \\
                        0 & 0.058 &  0.12 \\
                        0 & 0.1 &  1.00
                    \end{array} \end{pmatrix} \, .
\]
As discussed in the introduction, in \cite{geenens2020copula}, the author emphasizes the challenge of inferring dependence from tables with non-uniform margins, such as Table~\ref{tab:id7}.
In line with copula theory for continuous random variables, the author proposes finding a representative $\bar{\mathbf{p}}$ for all $(R \times S)$ probability distributions that (1) maintains the inter-dependencies in a contingency table using odds ratios, and (2) has uniform margins of $1/R$ and $1/S$. 
In discrete copulas terms, this corresponds to finding a discrete copula defined on the rectangular grid $I_R \times I_S$ that maintains the dependence structure of the initial discrete copula derived from a specific contingency table.
In the example above, the problem is to identify a discrete copula $\tilde{C}_1$ with its domain on $I_2 \times I_2$ and margins $I_2$, which is somehow compatible with the original discrete copula $C_1$. 

A natural question that arises is whether or not such an element exists. \ep{If it exists, it is unique in line with the theory of I-projections \cite{Csiszar_1975}.}
For two-dimensional tables, the answer to this question is provided by a theorem presented in \cite{geenens2020copula}, which we summarize below in a simplified version. \rf{The theorem examines the structure of the given zero pattern in terms of the \emph{rectangular sets} it contains, establishing conditions based on their ``size''.}

\begin{theorem}
\label{th:Geenens_6.1}
Let $\mathbf{p}$ be in the set $\mathcal{P}_{R \times S}$ of all $(R \times S)$ probability distributions.  We define $\text{Supp}(\mathbf{p})=\{(i,j) \in [R] \times [S] \text{ s.t. } p_{i,j}>0\}$ \rf{, the support of $\mathbf{p}$,} and 
$\text{N}(\mathbf{p})=\{(v_{X_1} \times v_{X_2}): v_{X_1} \subset [R], v_{X_2} \subset [S] s.t. \sum\limits_{(i,j) \in v_{X_1} \times v_{X_2}} p_{i,j}=0\}$, the set of rectangular subset\rf{s} of $[R] \times [S]$ where $\mathbf{p}$ is null. The cardinality of a set $A$ is denoted by $|A|$.
\begin{enumerate}
\item Suppose that for all $(v_{X_1} \times v_{X_2}) \in N(\mathbf{p})$, $\frac{|v_{X_1}|}{R} + \frac{|v_{X_2}|}{S} < 1$, then there exists a unique $\bar{\mathbf{p}}$ with uniform margins associated with a discrete copula $C_{I_R \times I_S}$.
\item Suppose that for all $(v_{X_1} \times v_{X_2}) \in N(\mathbf{p})$, $\frac{|v_{X_1}|}{R} + \frac{|v_{X_2}|}{S} \leq 1$ with $\frac{|\tilde{v}_{X_1}|}{R} + \frac{|\tilde{v}_{X_2}|}{S}= 1$ for some $(\tilde{v}_{X_1} \times \tilde{v}_{X_2}) \in N(\mathbf{p})$.
\begin{itemize}
\item[(i)] If, for all $(\tilde{v}_{X_1} \times \tilde{v}_{X_2}) \in N(\mathbf{p})$ such that $\frac{|\tilde{v}_{X_1}|}{R} + \frac{|\tilde{v}_{X_2}|}{S} = 1, ([R] \setminus \tilde{v}_{X_1} \times [S] \setminus \tilde{v}_{X_2}) \in N(\mathbf{p})$, then there exists a unique $\bar{\mathbf{p}}$ with uniform margins associated with a discrete copula $C_{I_R \times I_S}$.
\item[(ii)] If, there exists $(\tilde{v}^*_{X_1} \times \tilde{v}^*_{X_2}) \in N(\mathbf{p})$ such that $\frac{|\tilde{v}^*_{X_1}|}{R} + \frac{|\tilde{v}^*_{X_2}|}{S} = 1, and ([R] \setminus \tilde{v}^*_{X_1} \times [S] \setminus \tilde{v}^*_{X_2}) \notin N(\mathbf{p})$, then there is no element $\bar{\mathbf{p}}$ with uniform margins and same support of the original table, but there is one element $\bar{\mathbf{p}}$ with support \rf{strictly contained in that of $\mathbf{p}$,  $\text{Supp}(\bar{\mathbf{p}}) \subset   \text{Supp}(\mathbf{p})$, and uniform margins}.
\end{itemize}
\item Suppose that there exists $\tilde{\tilde{v}}_{X_1} \times \tilde{\tilde{v}}_{X_2} \in N(\mathbf{p})$ such that $\frac{|\tilde{\tilde{v}}_{X_1}|}{R} + \frac{|\tilde{\tilde{v}}_{X_2}|}{S}> 1$. Then there is no element $\bar{\mathbf{p}}$ with uniform margins such that it has same odds ratio structure of $\mathbf{p}$ and is associated with a discrete copula $C_{I_R \times I_S}$, not even with a modified support.
\end{enumerate}
\end{theorem}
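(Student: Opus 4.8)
The plan is to reduce the statement to a purely combinatorial \emph{feasibility} question about nonnegative matrices with prescribed margins and prescribed support, and then to read off the three cases from a transportation feasibility criterion together with a mass-balance forcing argument. First I would dispose of the odds-ratio and uniqueness bookkeeping. For a fixed support $\Sigma=\mathrm{Supp}(\mathbf{p})$, any table $\mathbf{q}$ obtained from $\mathbf{p}$ by a positive diagonal scaling $q_{i,j}=p_{i,j}\,a_i b_j$ has exactly the same support and the same finite odds ratios, since the factors $a_i,b_j$ cancel in every ratio $q_{i,j}q_{k,\ell}/(q_{i,\ell}q_{k,j})$. By the theory of I-projections (\cite{Csiszar_1975}), whenever the family of distributions with uniform margins $1/R,1/S$ and support contained in $\Sigma$ is nonempty, the I-projection of $\mathbf{p}$ onto it exists, is unique, and has precisely this scaling form; it is therefore the unique uniform-margin table sharing the odds-ratio structure of $\mathbf{p}$. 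Hence the whole theorem reduces to deciding \emph{when a nonnegative $(R\times S)$ matrix with row sums $1/R$, column sums $1/S$, and a prescribed support exists}, and to identifying which support it can realize.

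Next I would model this as a bipartite transportation problem: rows are sources of supply $1/R$, columns are sinks of demand $1/S$, and only the cells of $\Sigma$ may carry flow. A uniform-margin table supported on $\Sigma$ is exactly a feasible flow saturating all margins, so by max-flow/min-cut (equivalently Gale's feasibility theorem for transportation with forbidden cells) the binding constraints come from the rectangles $A\times B$ lying entirely outside $\Sigma$, i.e.\ from the elements $(v_{X_1}\times v_{X_2})\in N(\mathbf{p})$ with $A=v_{X_1}$, $B=v_{X_2}$. Tracking the flow, the mass that the rows outside $A$ must deliver to the columns outside $B$ equals
\[
\frac{S-|B|}{S}-\frac{|A|}{R},
\]
and feasibility is equivalent to requiring this min-cut quantity to be nonnegative for every such rectangle, that is $\frac{|v_{X_1}|}{R}+\frac{|v_{X_2}|}{S}\le 1$ for all $(v_{X_1}\times v_{X_2})\in N(\mathbf{p})$. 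This already yields Case~3: a rectangle of size $>1$ is an infeasible cut, and since shrinking the support only enlarges the forbidden region, no $\bar{\mathbf{p}}$ with uniform margins exists even with a modified support.

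The same balance is the heart of Cases~1 and~2. When $\frac{|A|}{R}+\frac{|B|}{S}=1$ exactly, the displayed quantity is zero, so \emph{every} feasible table must place zero mass on the complementary rectangle $([R]\setminus A)\times([S]\setminus B)$; a tight rectangle forces its complement to vanish. In Case~1 all inequalities are strict, no cell is forced to zero, the full support $\Sigma$ is realizable, and the unique I-projection exists. In Case~2(i) each tight complement already lies in $N(\mathbf{p})$, so the forced zeros do not shrink the support, the realized support equals $\Sigma$, and the I-projection is again unique. In Case~2(ii) some forced complement contains cells of $\Sigma$, which therefore cannot remain positive: no uniform-margin table with support exactly $\Sigma$ exists, but zeroing out the forced cells yields a strictly smaller feasible problem, producing a $\bar{\mathbf{p}}$ with $\mathrm{Supp}(\bar{\mathbf{p}})\subsetneq\Sigma$ and uniform margins.

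The delicate point is not the existence of \emph{some} nonnegative solution but control of its \emph{exact} support. I must show that the only cells forced to zero across the whole feasible polytope are precisely those in the complements of tight rectangles, and that all remaining cells of $\Sigma$ can be made simultaneously positive; I would argue this by taking a point in the relative interior of the nonempty feasible transportation polytope, where a strictly positive convex combination of feasible solutions spreads mass onto every non-forced cell. Confirming that the reduced problem in Case~2(ii) is again feasible, so that the shrinking terminates at a genuine uniform-margin table, is the remaining technical step, and the characterization of forced zeros via tight rectangles is the main obstacle.
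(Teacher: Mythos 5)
You should first note that the paper itself offers no proof of this statement: Theorem~\ref{th:Geenens_6.1} is explicitly presented as a simplified restatement of a result from \cite{geenens2020copula}, so there is no internal argument to compare against; your proposal has to stand on its own. Your overall route --- recasting the problem as a transportation problem with forbidden cells, invoking the Gale--Hoffman/min-cut feasibility criterion to get the inequality $\frac{|v_{X_1}|}{R}+\frac{|v_{X_2}|}{S}\le 1$ for every null rectangle, observing that equality forces the complementary rectangle to vanish, and delegating uniqueness to the I-projection machinery of \cite{Csiszar_1975} --- is the standard and correct way to obtain all three cases, and your mass-balance computation $\frac{S-|B|}{S}-\frac{|A|}{R}$ is right. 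Case~3 and the ``forward'' half of Case~2 (a tight rectangle forces its complement to zero, hence 2(ii) rules out a solution with full support) are complete as written. Your worry about ``termination of the shrinking'' in 2(ii) is a non-issue: the feasible polytope is already nonempty under the $\le$ conditions, so any of its elements is the required $\bar{\mathbf{p}}$ with strictly smaller support; no iteration is needed.

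The genuine gap is the converse of the forcing argument, which you correctly identify as ``the main obstacle'' but then do not prove, and Cases~1 and~2(i) collapse without it. You need the statement: a cell of $\Sigma=\mathrm{Supp}(\mathbf{p})$ is zero in \emph{every} feasible table if and only if it lies in the complement $([R]\setminus A)\times([S]\setminus B)$ of some tight null rectangle $A\times B$. The ``if'' direction is your mass-balance argument; the ``only if'' direction requires an LP-duality or augmenting-path argument (if $\max x_{ij}=0$ over the feasible transportation polytope, extract from the dual optimal solution, or from the absence of an augmenting cycle through $(i,j)$, a partition $A\times B$ of forbidden cells with $\frac{|A|}{R}+\frac{|B|}{S}=1$ and $(i,j)\in A^{c}\times B^{c}$). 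Without this, the relative-interior point you invoke is only known to have support \emph{contained} in $\Sigma$, so in Case~1 and Case~2(i) you cannot conclude that the realized support equals $\Sigma$, and hence cannot apply the product-form/I-projection argument that delivers existence and uniqueness of a $\bar{\mathbf{p}}$ with the \emph{same} support and odds-ratio structure. A secondary, smaller imprecision: the identification of ``unique table with uniform margins and the same odds-ratio structure'' with ``unique I-projection'' deserves a sentence of justification when $\Sigma$ is a proper subset of $[R]\times[S]$, since (as Section~\ref{Sec:zero-uniqueness} of the paper illustrates) the $2\times 2$ odds ratios alone need not determine the table on a restricted support and one must pass to generalized cross-product ratios supported on $\Sigma$.
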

We can interpret the example of Table~\ref{tab:id7} in light of Thm.~\ref{th:Geenens_6.1}. For a broader discussion of the two-dimensional binary case, we refer the reader to \cite{geenens2020copula}. 
The case reported in Table~\ref{tab:id7} does not show any zero pattern, since all entries are strictly positive. Therefore, the transformed table exists and is unique in agreement with Part~1 of Thm.~\ref{th:Geenens_6.1}. 
We now analyze the situation depicted in Fig.~\ref{fig:sheffield_tikz}~(c) where one entry is zero while the others are non-zero. 
This case falls under point 2~(ii) of Thm.~\ref{th:Geenens_6.1}. Specifically, $|v_{X_1}| = |v_{X_2}| = 1$. Hence, $\frac{|\tilde{v}^*_{X_1}|}{2} + \frac{|\tilde{v}^*_{X_2}|}{2} = 1$ and a solution \ep{with exactly the same support} does not exist in agreement with the conclusion of the introduction. 
If we move to the boundary of the probability space by adding an extra zero in the zero pattern (that is, allowing $p_{11}$ to be set equal to zero), a solution exists. 
When it exists, the element $\bar{\mathbf{p}}$ can be obtained using the Iterative Proportional Fitting Procedure (IPFP), which is a standard method in contingency table analysis for a meaningful comparison of tables with different margins and same dependence structure in terms of corresponding odds ratios \ep{and their higher order combinations} \cite{rudas2018lectures,geenens2020copula}.
\rf{We consider an $R \times S$ table $T$ and a probability distribution $p=(p_{ij}:i=1,\ldots,R, \; j=1,\ldots S)$ defined over $T$. Without loss of generality, we assume all marginals $p_{i+}=\sum_{j=1}^{S} p_{ij}$ and $p_{+j}=\sum_{i=1}^{R} p_{ij}$ to be nonzero, $i=1,\ldots,R, \; j=1,\ldots S$. At the $k$-th iteration, $k=1,\ldots,N_{\max}$, the IPFP algorithm performs the following \emph{row} and \emph{column} transformations:
\begin{eqnarray}
p_{ij}^{(k+1)}=\frac{1/R}{p_{i+}^{(k)}} p_{ij}^{(k)}, \;\; i=1,\ldots,R, \; j=1,\ldots S  \\
p_{ij}^{(k+1)}=\frac{1/S}{p_{+j}^{(k+1)}} p_{ij}^{(k+1)},  \;\; i=1,\ldots,R, \; j=1,\ldots S,     
\end{eqnarray}  
where $p_{ij}^{(1)}=p_{ij}$ and $N_{\max}$ is a predefined maximum number of iterations. The output of the algorithm is a new table $T'$ with pmf $p_{ij}^{(N_{\max})}$ that, under the conditions of Thm.~\ref{th:Geenens_6.1}, has uniform margins. For further details, particularly regarding the extension to $d$-way tables, we refer the interested reader to \cite{barthelemy2018mipfp}.} \ep{
It should be noted that IPFP preserves the support and odds ratio structure in terms of the odds ratio matrix, i.e., the matrix of all conditional odds ratios, and the higher order interactions of the odds ratio of the initial table across the iterations. 
If there are no zeros in the support, the odds ratio matrix gives sufficient constraints to identify the transformed table. With an example, we now show that if zeros are present in the table, the conditional odds ratios alone do not determine the table, and higher order combinations of the odds ratios must be considered.} 
\fr{We examine the following $3 \times 3$ table with three zeros on the main diagonal:
\[
\begin{pmatrix} \begin{array}{ccc}
                       0 & p_{12} & p_{13}  \\
                        p_{21} & 0 &  p_{23} \\
                        p_{31} & p_{32} &  0
                    \end{array} \end{pmatrix} \, .
\]
This table falls within the existence case of Thm.~\ref{th:Geenens_6.1}. Nevertheless, the odds-ratios defined on $2 \times 2$ sub-tables are not enough to identify the equivalent table. In fact, no odds ratio on $2 \times 2$ sub-tables is well defined, and to identify the transformed table we need to fix also the quantity
\begin{equation} \label{sixpieces}
\frac {p_{12}p_{23}p_{31}} {p_{13}p_{21}p_{32}} \, ,
\end{equation}
which arises as the symbolic product
\begin{equation} \label{symbcanc}
\frac {p_{12}p_{23}} {p_{13}p_{22}} \cdot \frac {p_{22}p_{31}} {p_{21}p_{32}} \, .
\end{equation}
The quantity reported in Eq.~\eqref{sixpieces} is well known in Algebraic Statistics for tables with structural zeros: It arises from elimination theory, which provides a formal and rigorous justification for the symbolic cancelation in Eq.~\eqref{symbcanc}. For more details, see, e.g. \cite{rapallo06}. In Sec.~\ref{Sec:zero-uniqueness}, we analyze similar examples in dimension three. 
}

In the following, we further discuss how our problem relates to standard theory in log-linear models.

\subsection{Relations with the classical theory of log-linear models}

In log-linear models, there is a well-established link between IPFP and Maximum Likelihood Estimation (MLE). The use of IPFP to find MLE in complete tables dates back to \cite{Fienberg70} (see also \cite{FienbergMeyer2006}). IPFP works in log-linear models as follows: First, define a table satisfying all the desired independence and/or conditional independence statements, and then, run the IPFP to adjust the table with respect to the relevant marginal distributions of the sufficient statistic. 
When there are zeros in the table, the existence of the MLE is generally not guaranteed. 
Important developments in this case have been achieved with the use of Algebraic Statistics: In most cases there are criteria to identify zero patterns which preserve the existence of the MLE, see \cite{fienbergrinaldo12}. 
In large classes of statistical models, including hierarchical log-linear models and staged trees, recent results have been introduced in \cite{coons24}. Despite these connections, our problem is different because the margins are constrained to be uniform, while in log-linear models the margins are determined by suitable margins of the observed table. 
To clarify this key difference, we can refer again to the example depicted in Fig.~\ref{fig:sheffield_tikz}~(c). In that situation, it is clear that there is no table with uniform margins and the prescribed support. 
However, if we start with unbalanced margins, i.e., $(1/3,2/3)$, and relax the constraints of uniform margins, we can easily obtain tables that maintain the zero pattern.

\section{Impact of zeros in the existence of the transformed multi-way table}
\label{Sec:zero-existence}
As mentioned in the previous sections, the presence of zeros in the table affects the construction of a table with uniform margins in different ways. 
First, some odds ratio equations in Eq.~\eqref{oreq} become meaningless and can thus be removed from the system. 
Additionally, the presence of zeros reduces the support of the probability distribution, reducing the number of free probabilities in the table. 


As a prototype, let us consider again the $2 \times 2$ case.
Given a $2\times 2$ contingency table $(\Tilde{n}_{ij}, i,j=0,1)$ let $N=\Tilde{n}_{00}+\Tilde{n}_{11}+\Tilde{n}_{01}+\Tilde{n}_{10}$ be the grand total and $\omega^{12}=\frac{\Tilde{n}_{00}\Tilde{n}_{11}}{\Tilde{n}_{01}\Tilde{n}_{10}}$ the odds ratio. We examine the table in terms of the relative frequencies $\Tilde{p}_{ij}=\frac{\Tilde{n}_{ij}}{N}$ rather than the counts $\Tilde{n}_{ij}$.
As shown in the motivating example in Sect.~\ref{Sec:Intro}, the goal is to determine a new table $(p_{ij} \geq 0, i,j=0,1)$ such that the marginals are still $1/2$ and the original table's odds ratio $\omega^{12}$ is maintained. Such a table can be obtained by solving the following system of equations:

\begin{equation}
\left\{
\begin{aligned}
\frac{p_{00} p_{11}}{p_{01} p_{10}}&=\omega^{12} \\
p_{00} +p_{01} +p_{10} +p_{11} &=1\\
p_{00} +p_{01} - p_{10}  - p_{11} &=0\\
p_{00} - p_{01} +p_{10} - p_{11} &=0
\end{aligned}
\right. ,
\label{eq:d2}
\end{equation}
whose unique solution is given as follows:
\begin{equation}
p_{00}=p_{11}=\frac{\sqrt{\omega^{12}}}{2(1+\sqrt{\omega^{12}})}, \qquad 
p_{01}=p_{10}=\frac{1}{2(1+\sqrt{\omega^{12}})}.
\label{eq:sol2}
\end{equation}

In the described $2^2$ scenario, addressing the issue of zeros in the table is straightforward, with two possible situations: (a) The table is entirely filled, or (b) The table might contain one or more zero entries. 
As discussed in Sec.~\ref{Sec:background}, if the table is complete, there exists a unique table with identical odds ratios and uniform margins, obtainable through the IPFP. 
Referring again to \cite{geenens2020copula}, in scenario (b), odds ratios are absent, simplifying the task of identifying a table with the same support as the original and uniform margins. This scenario is only achievable when there are two zeros located at $(0,0), (1,1)$ or $(1,0),(0,1)$.
In an intermediate scenario with a single zero present in the table as in Fig.~\ref{fig:sheffield_tikz}~(c), the IPFP leads to a boundary solution, resulting in another cell probability approaching zero. In this case, there are no tables which are zero-compatible with the original table and also have uniform margins.
In the next section, we provide existence results for arbitrary dimensions and zero patterns.

\subsection{$d$-way binary tables} \label{sec:dwaytab}
We now move to the general $d$-way \ep{binary} case. We denote $\{0,1\}^k$ by $V_k$, $k=1,\ldots,d$. To simplify the notation, sometimes we write $\mathcal{D}$ instead of $V_d=\{0,1\}^d$.
Here, we are interested in tables with uniform margins. More specifically, there are $d$ margins $m_i, i=1,\ldots,d$ defined as
\[
m_i=(m_{i,0},m_{i,1})=\left(\sum_{\alpha \in \mathcal{D}, \alpha_i=0} p_\alpha, \sum_{\alpha \in \mathcal{D}, \alpha_i=1} p_\alpha \right) ,
\]
where $\mathcal{D}=\{0,1\}^d$.
We observe that $m_{i,1}=E[X_i]$ and $m_{i,0}+m_{i,1}=1$, $i=1,\ldots,d$.
To obtain uniform margins, the pmf of the table must satisfy the following constraints
\[
m_{i,0}=m_{i,1}=\frac{1}{2},  \; i=1\ldots,d\, .
\]
The next proposition \ref{prop:segmenti} states a general result on the structure of the tables with uniform margins that are compatible with a class of zero patterns. 

\begin{proposition} \label{prop:segmenti}
Let us consider a table $T$, with relative frequencies $p=(p_\alpha, \alpha \in V_d)$ and uniform margins. Let us suppose that \rf{, given $i_1,i_2 \in \{1,\ldots,d\}, i_1 \neq i_2,$ and $y_{i_1}, y_{i_2} \in \{0,1\} $}, $p_\alpha=0$ for $\alpha\in \{(\alpha_1,\ldots, \alpha_d) \in V_d: \alpha_{i_1}=y_{i_1}, \alpha_{i_2}=y_{i_2}\}$. Then $p_\alpha=0$ for $\alpha\in \{(\alpha_1,\ldots, \alpha_d) \in V_d: \alpha_{i_1}=1-y_{i_1}, \alpha_{i_2}=1-y_{i_2}\}$.
\end{proposition}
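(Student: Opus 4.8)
The plan is to collapse the $d$-dimensional table onto the two coordinates $i_1$ and $i_2$ and to argue entirely at the level of the resulting $2\times 2$ aggregate, using only the uniform univariate margins together with nonnegativity of the entries.

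First, for $a,b\in\{0,1\}$ I would introduce the aggregated masses $P_{a,b}=\sum_{\alpha\in V_d:\,\alpha_{i_1}=a,\ \alpha_{i_2}=b}p_\alpha$, so that the hypothesis reads exactly $P_{y_{i_1},y_{i_2}}=0$ and the desired conclusion becomes $P_{1-y_{i_1},1-y_{i_2}}=0$. The key observation is that the only structural input needed is the collection of univariate margin constraints: summing $p_\alpha$ over all cells with $\alpha_{i_1}=y_{i_1}$ yields $m_{i_1,y_{i_1}}=1/2$, which splits as $P_{y_{i_1},y_{i_2}}+P_{y_{i_1},1-y_{i_2}}=1/2$; summing over all cells with $\alpha_{i_2}=y_{i_2}$ yields $P_{y_{i_1},y_{i_2}}+P_{1-y_{i_1},y_{i_2}}=1/2$; and summing over all cells with $\alpha_{i_1}=1-y_{i_1}$ yields $P_{1-y_{i_1},y_{i_2}}+P_{1-y_{i_1},1-y_{i_2}}=1/2$.

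Next, substituting the hypothesis $P_{y_{i_1},y_{i_2}}=0$ into the first two relations gives $P_{y_{i_1},1-y_{i_2}}=P_{1-y_{i_1},y_{i_2}}=1/2$; feeding $P_{1-y_{i_1},y_{i_2}}=1/2$ into the third relation forces $P_{1-y_{i_1},1-y_{i_2}}=0$. Finally, since every $p_\alpha$ is a relative frequency and hence nonnegative, a vanishing sum of nonnegative terms forces each term to vanish, so $p_\alpha=0$ for all $\alpha$ with $\alpha_{i_1}=1-y_{i_1}$ and $\alpha_{i_2}=1-y_{i_2}$, which is the assertion.

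There is no serious obstacle here; the substance of the argument is the recognition that the four univariate margins overdetermine the $2\times 2$ aggregate so that one corner being zero pins the opposite corner to zero, with no recourse to the odds-ratio structure. The only point requiring a little care is the final passage from the aggregate identity $P_{1-y_{i_1},1-y_{i_2}}=0$ back to the individual cells, which is precisely where nonnegativity of $p$ is used.
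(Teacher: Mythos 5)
Your proof is correct and is essentially the same argument as the paper's: the paper also aggregates over the remaining $d-2$ coordinates (writing the sums explicitly rather than naming them $P_{a,b}$), uses the uniform margins of $X_{i_1}$ and $X_{i_2}$ to force the opposite aggregated corner to equal zero, and then invokes nonnegativity cell by cell. The only cosmetic difference is that the paper reaches $P_{1-y_{i_1},1-y_{i_2}}=0$ via two margin identities instead of your three, and fixes $i_1=1$, $i_2=2$, $y_{i_1}=y_{i_2}=0$ without loss of generality.
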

\begin{proof}
Without loss of generality, we can take $i_1=1$, $i_2=2$, $y_{i_1}=y_{i_2}=0$. The hypothesis on $p$ can be written as $p_{00\alpha}=0, \alpha \in V_{d-2}$.
We compute $m_{1,0}$, the first component of the first margin:
\[
m_{1,0}=\sum_{\alpha' \in V_{d-1}} p_{0\alpha'} = \sum_{\alpha'' \in V_{d-2}} p_{00\alpha''}+\sum_{\alpha'' \in V_{d-2}} p_{01\alpha''}= \sum_{\alpha'' \in V_{d-2}} p_{01\alpha''}\, .
\]
The last equality in the above equation follows by $p_{00\alpha}=0, \alpha \in V_{d-2}$. 
The table has uniform margins, in particular $m_{1,0}=\frac{1}{2}$, and as a consequence,
\[
\sum_{\alpha'' \in V_{d-2}} p_{01\alpha''}=\frac{1}{2}.
\]
Let us now consider the second component of the second margin, $m_{2,1}$. The table $T$ has uniform margins and then $m_{2,1}=\frac{1}{2}$. Thus, we obtain the following chain of equalities:
\[
m_{2,1}=\sum_{\alpha_1 \in \{0,1\}, \alpha'' \in V_{d-2}} p_{\alpha_1,1,\alpha''} = \sum_{\alpha'' \in V_{d-2}} p_{01\alpha''}+\sum_{\alpha'' \in V_{d-2}} p_{11\alpha''}= \frac{1}{2} +\sum_{\alpha'' \in V_{d-2}} p_{11\alpha''} \, .
\]
Consequently, we obtain $\sum_{\alpha'' \in V_{d-2}} p_{11\alpha''}=0$, that is $p_{11\alpha''}=0, \alpha'' \in V_{d-2}$.
\end{proof}

\rf{We observe that if the zero pattern is one of those expressed by the condition of Proposition \ref{prop:segmenti}, e.g., by choosing $i_1=1, i_2=2, y_{i_1}=0, y_{i_2}=0$ that means
\[
p_\alpha=\begin{cases}
0 & \alpha=(0,0,\alpha'') \text{ or } \alpha=(1,1,\alpha''), \; \alpha'' \in V_{d-2} \\
>0 & \text{ elsewhere} \\
\end{cases} \, .
\]
we can build a compatible table $T$ with uniform margins. 
}
Namely, we define
\[
p_\alpha=\begin{cases}
0 & \alpha=(0,0,\alpha'') \text{ or } \alpha=(1,1,\alpha''), \; \alpha'' \in V_{d-2} \\
\frac{1}{2^{d-1}} & \text{ elsewhere} \\
\end{cases} \, .
\]
Since all margins have a similar structure, it is enough to compute $m_{1,0}$ and $m_{3,0}$, which are:
\[
m_{1,0}=\sum_{\alpha'' \in V_{d-2}} p_{01\alpha''}=\sum_{\alpha'' \in V_{d-2}} \frac{1}{2^{d-1}}=\frac{2^{d-2}}{2^{d-1}}=\frac{1}{2},
\]
\[
m_{3,0}=\sum_{\alpha \in \mathcal {D}, \alpha_3=0} p_\alpha= \sum_{\alpha \in V_{d-3}} p_{010\alpha}+ \sum_{\alpha \in V_{d-3}} p_{100\alpha} = \frac{1}{2^{d-1}} 2^{d-3} + \frac{1}{2^{d-1}} 2^{d-3}= \frac{1}{2}.
\]
Finally, the condition $\sum_{\alpha \in \mathcal{D}} p_\alpha=1$ can be verified by observing, for example, that 
\[
\sum_{\alpha \in \mathcal{D}} p_\alpha =m_{1,0}+m_{1,1}=\frac{1}{2}+\frac{1}{2}=1.
\]

To further explore the existence of a table with uniform margins, we use the following lemma. The lemma is based on a rewriting of the multidimensional case Th.~3 in \cite{franklinlorenz}, and an adaptation of the notation to binary tables.

\begin{lemma} \label{simpllemma}
Let $T_1=(p_\alpha)$ and $T_2=(q_\alpha)$ be two $d$-dimensional binary probability tables with the same zero pattern, and assume that $T_2$ has uniform margins. Then there exist positive vectors $w_1=(w_{10},w_{11}), \ldots , w_d=(w_{d0},w_{d1})$ such that the table $T_3$ with generic element
\[
\tilde p_{\alpha_1 \cdots \alpha_d} = p_{\alpha_1 \cdots \alpha_d}w_{1\alpha_1} \cdots w_{d\alpha_d}
\]
has uniform margins.
\end{lemma}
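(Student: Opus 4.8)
The plan is to recognize the statement as a multi-way \emph{matrix scaling} problem and to reduce it directly to the multidimensional scaling theorem of \cite{franklinlorenz} (their Th.~3), on which the lemma is explicitly modeled. Under the natural dictionary, the scaling vectors $w_i=(w_{i0},w_{i1})$ play the role of the per-coordinate diagonal scalings applied to the nonnegative $2\times 2\times\cdots\times 2$ tensor $(p_\alpha)$: forming $\tilde p_{\alpha_1\cdots\alpha_d}=p_{\alpha_1\cdots\alpha_d}\,w_{1\alpha_1}\cdots w_{d\alpha_d}$ is exactly the operation of rescaling $T_1$ along each of its $d$ axes. The prescribed target margins are the uniform margins $m_{i,0}=m_{i,1}=1/2$, and the conclusion we want is precisely the existence of \emph{strictly positive} scaling vectors realizing those margins.

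First I would state the feasibility condition required by the theorem: a nonnegative tensor admits a strictly positive diagonal scaling achieving prescribed consistent margins, with its support unchanged, if and only if there already exists a nonnegative tensor with \emph{the same zero pattern} and exactly those margins. The point is that this hypothesis is supplied for free by $T_2=(q_\alpha)$: by assumption $T_2$ shares the zero pattern of $T_1$ and has uniform margins, so it is exactly the witness tensor the theorem needs. I would then invoke the theorem to obtain positive vectors $w_1,\ldots,w_d$ for which $T_3$ has the prescribed uniform margins, and observe that since each $w_{i\alpha_i}>0$ the scaled table $T_3$ keeps the zero pattern of $T_1$, as claimed. The only routine verifications are that the prescribed margins are consistent, which holds because each margin vector sums to the common total $1$ (indeed $m_{i,0}+m_{i,1}=1$ for every axis $i$), and that the notation of \cite{franklinlorenz} specializes correctly to the binary case.

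The hard part, and the reason the hypothesis is phrased in terms of an \emph{equal} zero pattern rather than a merely smaller one, is the boundary behaviour of the scaling. If uniform margins could be attained only by a table whose support were strictly contained in $\mathrm{Supp}(T_1)$, then the scaling would exist only in a limiting sense and the factors $w_{i\alpha_i}$ would degenerate, with some tending to $0$ or $\infty$; no strictly positive $w$ would then work. Requiring $T_2$ to share the zero pattern of $T_1$ is precisely what places the target margins in the relative interior of the margins attainable on that support, which is the non-degeneracy condition that guarantees genuine positive scalings. I would make this explicit as the crux of the argument.

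If a self-contained route were preferred over citing \cite{franklinlorenz}, I would instead realize the scaling variationally: the tables of the form $p_\alpha\prod_i w_{i\alpha_i}$ with positive $w$ form a log-linear (toric) family through $T_1$, and the uniform-margin table in the closure of this family is the I-projection of $T_1$ onto the affine set of uniform-margin tables \cite{Csiszar_1975}. Existence and uniqueness of this projection, together with the fact that it remains in the open family rather than on its boundary exactly when a same-support uniform-margin table such as $T_2$ exists, would yield the desired positive $w_{i\alpha_i}$, with the IPFP serving as the algorithmic counterpart that produces them. Either way the essential content reduces to the same non-degeneracy fact, which the hypothesis on $T_2$ hands us directly.
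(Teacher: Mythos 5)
Your proposal is correct and matches the paper's approach exactly: the paper gives no separate proof of this lemma, stating only that it is a rewriting of Th.~3 of \cite{franklinlorenz} adapted to binary tables, and your reduction to that multidimensional scaling theorem --- with $T_2$ serving as the same-zero-pattern witness required for strictly positive scaling factors --- is precisely that argument. Your added remarks on margin consistency and on why equality (rather than containment) of supports rules out degenerate boundary scalings are a sound elaboration of what the paper leaves implicit.
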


\rf{First of all we point out that the transformation in Lemma \ref{simpllemma} preserves the zero pattern and the odds ratio structure of the table.} We now highlight two key points on the use of Lemma \ref{simpllemma}. First, for the existence of a solution to our problem, it is enough to check the conditions on margins, since the conditions on the odds ratios \ep{and their higher order combinations} are automatically adjusted by the IPFP. Second, it is enough to prove the existence of a table with the same zero pattern and uniform margins, and it is not even necessary to actually determine such a table.

%
To do this, we propose two methods. The first method is based on combinatorial objects, while the second method is based on integer programming.

We point out that to have uniform margins the pmf $p$ of a table $T$ must satisfy the following constraints
\[
m_{i0}=m_{i1}=\frac{1}{2} \Leftrightarrow m_{i0}-m_{i1}=0 \Leftrightarrow \sum_{\alpha \in \mathcal{D}, \alpha_i=0} p_\alpha- \sum_{\alpha \in \mathcal{D}, \alpha_i=1} p_\alpha=0,  \; i=1\ldots,d
\]
that can be rewritten as
\begin{equation} \label{eq:system}
 (1_{\{\alpha \in \mathcal{D}, \alpha_i=0\}}-1_{\{\alpha \in \mathcal{D}, \alpha_i=1\}})^Tp=0, \;\;i=1\ldots,d \, , 
\end{equation}

where $1_{A}$ is the indicator vector of $A$, $b^T$ denotes the transpose of $b$, and $p$ is the vector $p=(p_\alpha, \alpha \in \mathcal{D})$. Eq.~\eqref{eq:system} can be rewritten in compact form by denoting as $H_d$ its matrix of coefficients. Namely, Eq.~\eqref{eq:system} becomes:
\begin{equation} \label{eq:HD}
    H_d \,p=0.
\end{equation}

For example, for $d=3$ we obtain
\[
H_3 p=
\begin{pmatrix}
1 & 1 & 1 & 1 & -1 & -1 & -1 & -1  \\
1 & 1 & -1 & -1 & 1 & 1 & -1 & -1 \\
1 & -1 & 1 & -1 & 1 & -1 & 1 & -1
\end{pmatrix}p=0 \, .
\]

The extreme rays of the system defined by $H_d y = 0$ with $y=(y_\alpha, \alpha \in \mathcal{D}, y_\alpha \geq 0)$ can be computed using software for commutative algebra. We used 4ti2, \cite{4ti2}. Then the corresponding extreme pmfs are computed by simple normalization $p_\alpha=\frac{y_\alpha}{\sum_{\alpha \in \mathcal{D}} y_\alpha}$.
For the $3$-dimensional case we obtain the extreme pmfs that are shown in Table \ref{tab:ex3}.
\begin{table}[h]
	\centering
		\begin{tabular}{ccc|rrrrrr}
$\alpha_1$ &	$\alpha_2$ &	$\alpha_3$ &	$r_1$ & 		$r_2$ & 		$r_3$ & 		$r_4$ & 		$r_5$ & 		$r_6$ \\
		\hline
0 &	0 &	0 &	$\frac{1}{2}$ &	$0$ &	$0$ &	$0$ &	$\frac{1}{4}$&	$0$ \\
1 &	0 &	0 &	$0$ &	$\frac{1}{2}$ &	$0$ &	$0$ &	$0$ & $\frac{1}{4}$ \\
0 &	1 &	0 &	$0$ &	$0$  & $\frac{1}{2}$ &	$0$ &	$0$ & $\frac{1}{4}$ \\
1 &	1 &	0 &	$0$ &	$0$  & $0$  & $\frac{1}{2}$ & $\frac{1}{4}$ &	$0$  \\
0 &	0 &	1 &	$0$ &	$0$  & $0$  & $\frac{1}{2}$  &	$0$ & $\frac{1}{4}$ \\
1 &	0 &	1 &	$0$ &	$0$  & $\frac{1}{2}$ &	$0$ & $\frac{1}{4}$ & $0$  \\
0 &	1 &	1 &	$0$ &	$\frac{1}{2}$ &	$0$ &	$0$ & $\frac{1}{4}$ &	$0$  \\
1 &	1 &	1 &	$\frac{1}{2}$ &	$0$ &	$0$ &	$0$ &	$0$ & $\frac{1}{4}$ \\
		\end{tabular}
	\caption{Extreme pmfs with uniform margins $d=3$.}
	\label{tab:ex3}
\end{table}

It is worth noting that, given the extreme pmfs $r_i, i=1,\ldots,n_d$ where $n_d$ depends on the dimension $d$ of the multi-way table (e.g., for the three-dimensional case we have $n_3=6$) any pmfs $p$ with uniform margins can be written has $p=\sum_{i=1}^{n_d} \lambda_i r_i$ with $\lambda_i\geq0, i=1\ldots,n_d$ and $\sum_{i=1}^{n_d} \lambda_i=1$.
In Thm.~\ref{thm:extremerays}, we provide conditions that a zero pattern must satisfy to be compatible with the existence of a table with uniform margins. 

\begin{theorem}\label{thm:extremerays}
Let $\mathcal{Z}$ be a zero pattern with $\mathcal{Z}=(z_i, i=1,\ldots,2^d)$. Let us denote by $\mathcal{A}$ the set of indices corresponding to zero cells, $\mathcal{A}=\{i \in \{0,\ldots,2^d\}: z_i=0\}$ and $\bar{\mathcal{A}}$ the set of the indices corresponding to non-zero cells, $\bar{\mathcal{A}}=\{i \in \{0,\ldots,2^d\}: z_i=1\}$. Let $\{r_i: i=1,\ldots,n_d\}$ be the set of extreme pmfs of the polytope defined by $H_d \, p=0$.

Let $\mathcal{S}_1$ be the subset of $\{1,\ldots,n_d\}$ defined as $\mathcal{S}_1=\{k \in \{1,\ldots,n_d\}: r_k(\alpha_{i})=0, i \in \mathcal{A}\}$. If $\mathcal{S}_1$ is not empty we consider the set $\mathcal{S}_2 \subseteq \bar{\mathcal{A}}$ that contains all $i \in \bar{\mathcal{A}}$ such that $\sum_{j \in \mathcal{S}_1} r_j(\alpha_i)>0$. 
 
If $\mathcal{S}_2= \bar{\mathcal{A}}$ then $p=\sum_{i \in \mathcal{S}_1}\frac{1}{|\mathcal{S}_1|}r_i$ is a pmf of a $\mathcal{Z}$-compatible table with uniform margins. If $\mathcal{S}_1=\emptyset$ or $\mathcal{S}_2 \subset  \bar{\mathcal{A}}$ no $\mathcal{Z}$-compatible table with uniform margins exists.
\begin{proof}
We know that any $p$ associated with a table $T$ with uniform margins can be represented as $p=\sum_{k=1}^{n_d} \lambda_k r_k$ where $\lambda_k \geq 0$, $\sum_{k=1}^{n_d} \lambda_k=1$, and $r_k, k=1,\ldots,n_d$ are the extreme pmfs of the polytope defined by $H_d \,p=0$.
We must have
\[
p(\alpha_i)=\sum_{k=1}^{n_d} \lambda_k r_k(\alpha_i)=0, \; i \in \mathcal{A}.
\]
We can choose a strictly positive value for $\lambda_k$ only if the extreme pmf $r_k$ satisfies the condition $r_k(\alpha_i)=0, i \in \mathcal{A}$. If $\mathcal{S}_1$ is empty, no $\mathcal{Z}$-compatible pmf can be built.
If $\mathcal{S}_1$ is not empty, we have
\[
p=\sum_{k \in \mathcal{S}_1} \lambda_k r_k, \lambda_k \geq 0, \sum_{k \in \mathcal{S}_1} \lambda_k=1.
\]
The pmf $p$ must be positive for $\alpha_i$, $i \in \bar{\mathcal{A}}$:
\[
p(\alpha_i)=\sum_{k \in \mathcal{S}_1} \lambda_k r_k(\alpha_i)>0, \; i \in \bar{\mathcal{A}}.
\]
The coefficients $\lambda_k, k \in \mathcal{S}_1$ are positive or zero. Then we can consider $\sum_{k \in \mathcal{S}_1} r_k(\alpha_i)$. If $\mathcal{S}_2=\bar{\mathcal{A}}$ it would be enough to choose $\lambda_k=\frac{1}{|\bar{\mathcal{A}}|}$: the pmf $p=\sum_{k \in \bar{\mathcal{A}} } \frac{1}{|\bar{\mathcal{A}}|} r_k$ has uniform margins and is $\mathcal{Z}$-compatible by construction.  
If $\mathcal{S}_2$ is a proper subset of $\bar{\mathcal{A}}$, $\mathcal{S}_2 \subset \bar{\mathcal{A}}$ no $\mathcal{Z}$-compatible pmf can be built.
\end{proof}
\end{theorem}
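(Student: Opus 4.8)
The plan is to characterize exactly when a $\mathcal{Z}$-compatible pmf with uniform margins exists by exploiting the fact, established just before the theorem, that every table $T$ with uniform margins lies in the convex hull of the finitely many extreme pmfs $r_1,\ldots,r_{n_d}$; that is, $p=\sum_{k=1}^{n_d}\lambda_k r_k$ with $\lambda_k\geq 0$ and $\sum_k\lambda_k=1$. The entire problem thus reduces to a question about which convex combinations of the $r_k$ can realize the prescribed zero pattern $\mathcal{Z}$, i.e., can vanish on exactly the cells indexed by $\mathcal{A}$ and be strictly positive on the cells indexed by $\bar{\mathcal{A}}$. First I would translate the two requirements into constraints on the coefficients $\lambda_k$: vanishing on $\mathcal{A}$ and positivity on $\bar{\mathcal{A}}$.

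The key structural observation is that since all $r_k$ and all $\lambda_k$ are nonnegative, a sum $\sum_k \lambda_k r_k(\alpha_i)$ can equal zero only if every term is individually zero. Hence, for the constraint $p(\alpha_i)=0$ for all $i\in\mathcal{A}$ to hold, one must set $\lambda_k=0$ for every extreme pmf $r_k$ that is \emph{not} identically zero on $\mathcal{A}$. This isolates the only admissible building blocks as precisely the set $\mathcal{S}_1=\{k: r_k(\alpha_i)=0,\ i\in\mathcal{A}\}$. The first immediate consequence is the negative direction: if $\mathcal{S}_1=\emptyset$, then no nonnegative combination can vanish on all of $\mathcal{A}$ while summing to one, so no $\mathcal{Z}$-compatible table exists.

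Assuming $\mathcal{S}_1\neq\emptyset$, the representation collapses to $p=\sum_{k\in\mathcal{S}_1}\lambda_k r_k$, and the remaining task is to meet the strict positivity requirement on $\bar{\mathcal{A}}$. Here I would argue that the \emph{support} achievable by any such combination is maximized by taking all coefficients strictly positive; equivalently, a cell $\alpha_i$ with $i\in\bar{\mathcal{A}}$ can be made positive by some admissible combination if and only if $\sum_{k\in\mathcal{S}_1} r_k(\alpha_i)>0$, which is exactly the defining condition for membership in $\mathcal{S}_2$. Thus $\mathcal{S}_2$ records precisely those non-zero-pattern cells that the admissible extreme pmfs can collectively cover. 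If some cell $i\in\bar{\mathcal{A}}\setminus\mathcal{S}_2$ exists, then every admissible combination forces $p(\alpha_i)=0$, contradicting the requirement $p(\alpha_i)>0$; this proves the negative direction when $\mathcal{S}_2\subsetneq\bar{\mathcal{A}}$.

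For the positive direction, when $\mathcal{S}_2=\bar{\mathcal{A}}$, I would simply exhibit the uniform-weight combination $p=\sum_{k\in\mathcal{S}_1}\frac{1}{|\mathcal{S}_1|}r_k$ and verify it works: it is a convex combination of extreme pmfs, hence has uniform margins; it vanishes on $\mathcal{A}$ by construction since each summand does; and it is strictly positive on every $i\in\bar{\mathcal{A}}=\mathcal{S}_2$ because at least one $r_k$, $k\in\mathcal{S}_1$, is positive there and all weights are positive. The main subtlety to handle carefully is the positivity-of-support claim, namely that giving all admissible extreme pmfs strictly positive weight achieves the union of their individual supports; this rests on the non-cancellation property of sums of nonnegatives and must be stated cleanly rather than assumed. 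The rest is bookkeeping on the two index sets $\mathcal{S}_1$ and $\mathcal{S}_2$.
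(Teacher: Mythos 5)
Your proposal is correct and follows essentially the same route as the paper's own proof: represent any uniform-margin pmf as a convex combination of the extreme pmfs, use nonnegativity to force the weights onto $\mathcal{S}_1$, and then handle positivity on $\bar{\mathcal{A}}$ via $\mathcal{S}_2$ and the uniform-weight combination. Your explicit statement of the non-cancellation property of sums of nonnegatives is a slightly cleaner articulation of a step the paper leaves implicit, but the argument is the same.
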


\begin{corollary} \label{cor:morezeros}
Let $\mathcal{Z}$ be a zero pattern with $\mathcal{Z}=(z_i, i=1,\ldots,2^d)$. If $\mathcal{S}_1$ is empty then no $\mathcal{Z'}$-compatible table with uniform margins exists for any $\mathcal{Z}' \leq \mathcal{Z}$, where   $\mathcal{Z}' \leq \mathcal{Z}$ means $z_i'\leq z_i, \; i=1,\ldots, 2^d$, $\mathcal{Z}'=(z_1',\ldots,z_{2^d}')$.
\end{corollary}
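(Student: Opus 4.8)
The plan is to reduce the statement to a direct application of Theorem~\ref{thm:extremerays}, by observing that the set $\mathcal{S}_1$ is monotone with respect to the partial order on zero patterns. The key point is that the family of extreme pmfs $\{r_i : i=1,\ldots,n_d\}$ is fixed once the dimension $d$ is fixed, since it consists of the extreme rays of the polytope defined by $H_d\,p=0$ and is independent of the prescribed zero pattern. Thus both $\mathcal{Z}$ and any $\mathcal{Z}' \leq \mathcal{Z}$ are tested against the same list of extreme pmfs, and only the set of cells forced to be zero changes.

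First I would unpack the meaning of $\mathcal{Z}' \leq \mathcal{Z}$. Since $z_i'\leq z_i$ with both entries binary, the inequality forces $z_i'=0$ whenever $z_i=0$; equivalently, every cell that $\mathcal{Z}$ forces to be zero is also forced to be zero by $\mathcal{Z}'$. Writing $\mathcal{A}$ and $\mathcal{A}'$ for the sets of zero-cell indices of $\mathcal{Z}$ and $\mathcal{Z}'$ respectively, this reads precisely as $\mathcal{A}\subseteq\mathcal{A}'$.

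Second I would track how $\mathcal{S}_1$ changes. For $\mathcal{Z}$ we have $\mathcal{S}_1=\{k\in\{1,\ldots,n_d\}: r_k(\alpha_i)=0,\ i\in\mathcal{A}\}$, while for $\mathcal{Z}'$ the analogous set is $\mathcal{S}_1'=\{k\in\{1,\ldots,n_d\}: r_k(\alpha_i)=0,\ i\in\mathcal{A}'\}$. Because $\mathcal{A}\subseteq\mathcal{A}'$, demanding that an extreme pmf vanish on all of $\mathcal{A}'$ is a stronger requirement than demanding that it vanish on all of $\mathcal{A}$, so every index in $\mathcal{S}_1'$ already lies in $\mathcal{S}_1$; that is, $\mathcal{S}_1'\subseteq\mathcal{S}_1$.

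The conclusion is then immediate: if $\mathcal{S}_1=\emptyset$, the inclusion $\mathcal{S}_1'\subseteq\mathcal{S}_1$ gives $\mathcal{S}_1'=\emptyset$ as well, and the non-existence half of Theorem~\ref{thm:extremerays} (the case of an empty $\mathcal{S}_1$) shows that no $\mathcal{Z}'$-compatible table with uniform margins can exist. Since $\mathcal{Z}'$ was an arbitrary zero pattern with $\mathcal{Z}'\leq\mathcal{Z}$, this proves the claim. I do not expect a genuine obstacle: the entire content is the set inclusion $\mathcal{S}_1'\subseteq\mathcal{S}_1$, and the only point requiring care is that the extreme pmfs $r_k$ are the same for $\mathcal{Z}$ and $\mathcal{Z}'$, which holds because they depend only on $d$. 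In particular, the auxiliary set $\mathcal{S}_2$ never enters, as the hypothesis $\mathcal{S}_1=\emptyset$ already triggers the non-existence conclusion of Theorem~\ref{thm:extremerays} directly.
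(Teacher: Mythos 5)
Your proof is correct and follows exactly the argument the paper intends (the corollary is stated without an explicit proof, as an immediate consequence of Theorem~\ref{thm:extremerays}): since $z_i'\leq z_i$ forces $\mathcal{A}\subseteq\mathcal{A}'$, the monotonicity $\mathcal{S}_1'\subseteq\mathcal{S}_1$ follows because the extreme pmfs $r_k$ depend only on $d$, and the non-existence branch of the theorem applies. Nothing is missing.
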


\begin{example}
    
\noindent We consider some examples for $d=3$ with zero patterns reported in the last columns of Table~\ref{tab:ex3z}.
{\bf Case 1.} The first case, $\mathcal{Z}_1$, is a $7$-zero pattern. For this case, we get $\mathcal{S}_1=\emptyset$ and then no table with uniform margins exists for this case. 

\noindent{\bf Case 2.} The second case, $\mathcal{Z}_2$, is a $6$-zero pattern. Here, we get $\mathcal{S}_1=\emptyset$. Thus, in such a case, no table with uniform margins exists. Using Cor.~(\ref{cor:morezeros}) we can conclude that for any zero pattern $\mathcal{Z}'$, $z'=(z_1,z_2,0,0,0,0,0,0)$, $z_1,z_2 \in\rf{\{0,1\}}$ no $\mathcal{Z}'$-compatible table with uniform margins exists. 

\noindent {\bf Case 3.} The third case, $\mathcal{Z}_3$, is a $2$-zero pattern. We get $\mathcal{S}_1=\{2,4\}$. In this case, $\mathcal{A}=\{1,3\}$, $\bar{\mathcal{A}}=\{2,4,5,6,7,8\}$, and $\mathcal{S}_2=\{2,4,5,7\} \rf{\subset} \bar{\mathcal{A}}$. Then no table with uniform margins exists for this case. 

\noindent{\bf Case 4.} The fourth case, $\mathcal{Z}_4$, is a $4$-zero pattern. We obtain $\mathcal{S}_1=\{2,4\}$. Here, $\mathcal{A}=\{1,3,6,8\}$, $\bar{\mathcal{A}}=\{2,4,5,7\}$, and $\mathcal{S}_2=\{2,4,5,7\} = \bar{\mathcal{A}}$. Then $p=\sum_{k \in \{2,4\}} \frac{1}{2} r_k=(0\,0.25\,0\,0.25\,0.25\,0\, 0.25\,0)$ has uniform margin\rf{s} and is compatible with $\mathcal{Z}_4$.
\begin{table}[b!]
	\centering
		\begin{tabular}{ccc|rrrrrr|cccc}
$\alpha_1$ &	$\alpha_2$ &	$\alpha_3$ &	$r_1$ & 		$r_2$ & 		$r_3$ & 		$r_4$ & 		$r_5$ & 		$r_6$  & $\mathcal{Z}_1$ & $\mathcal{Z}_2$ & $\mathcal{Z}_3$ & $\mathcal{Z}_4$\\
		\hline
0 &	0 &	0 &	$\frac{1}{2}$ &	$0$ &	$0$ &	$0$ &	$\frac{1}{4}$&	$0$ & 1 & 1 & 0 & 0\\
1 &	0 &	0 &	$0$ &	$\frac{1}{2}$ &	$0$ &	$0$ &	$0$ & $\frac{1}{4}$ & 0 & 1 & 1 & 1\\
0 &	1 &	0 &	$0$ &	$0$  & $\frac{1}{2}$ &	$0$ &	$0$ & $\frac{1}{4}$ & 0 & 0 & 0 & 0\\
1 &	1 &	0 &	$0$ &	$0$  & $0$  & $\frac{1}{2}$ & $\frac{1}{4}$ &	$0$ & 0 & 0 & 1 & 1 \\
0 &	0 &	1 &	$0$ &	$0$  & $0$  & $\frac{1}{2}$  &	$0$ & $\frac{1}{4}$ & 0 & 0 & 1 & 1\\
1 &	0 &	1 &	$0$ &	$0$  & $\frac{1}{2}$ &	$0$ & $\frac{1}{4}$ & $0$  & 0 & 0 & 1 & 0\\
0 &	1 &	1 &	$0$ &	$\frac{1}{2}$ &	$0$ &	$0$ & $\frac{1}{4}$ &	$0$  & 0 & 0 & 1 & 1\\
1 &	1 &	1 &	$\frac{1}{2}$ &	$0$ &	$0$ &	$0$ &	$0$ & $\frac{1}{4}$ & 0 & 0 & 1 & 0\\
		\end{tabular}
	\caption{$d=3$: Extreme pmfs with uniform margins and four zero patterns.}
	\label{tab:ex3z}
\end{table}
\end{example}
As previously discussed, given the extreme pmfs \( r_i \), \( i=1,\ldots, n_d \), any pmf with uniform margins can be expressed as follows: 
\[
p = \sum_{i=1}^{n_d} \lambda_i r_i, \quad \lambda_i \geq 0, \quad \sum_{i=1}^{n_d} \lambda_i = 1.
\]
As a consequence, we have that each extreme pmf \( r_i \) itself represents a table with uniform margins that is compatible with \( \mathcal{Z} = (z_1, \ldots, z_{2^d}) \), where \( z_k = 0 \) (or \( z_k = 1 \)) if \( r_i(\alpha_k) = 0 \) (or \( r_i(\alpha_k) > 0 \), respectively), for \( k=1, \ldots, 2^d \) and \( i=1, \ldots, n_d \). Moreover, it is possible to sample from the set of tables with uniform margins by selecting \( \lambda_i \), \( i=1, \ldots, n_d \), according to some criterion within the \( (n_d-1) \)-dimensional simplex.

On the other hand, it is important to note that the cardinality \( n_d \) of the set of extreme pmfs grows rapidly with \( d \). For instance, when \( d=6 \), there are \( n_6 = 707,264 \) extreme pmfs, which poses a significant computational challenge for this approach.


An alternative way to tackle our problem would be through integer programming. 
Since our problem reduces to linear constraints through the Lemma~\ref{simpllemma}, we can set up a linear programming problem as follows. 
We start by considering the matrix $C_d$ with dimensions $(d+1)\times 2^d$ with: A first row of $1$'s; then block-wise repetitions of $1$'s and $-1$'s (as in a classical full-factorial $2^d$ design). 
$C_d$ is the matrix $H_d$ above except for the first row. 
Also, let us define the vector with length $d+1$ as $b_d=(1,0, \ldots, 0)$. 
For a given zero pattern ${\mathcal Z}$, we introduce the matrix $C_d(\mathcal{Z})$ obtained from $C_d$ by removing the columns that belong to the cells in $\mathcal{Z}$. 
The problem of the existence of a strictly positive probability distribution with uniform margins can be stated as follows:

\begin{equation} \label{eq:ip}
    \begin{array}{lcc}
     & \text{max}_p \delta & \\[3pt]
    {\text{subject to }} &    C_d(\mathcal{Z})p=b_d  & \\
    &  p_\alpha \geq \delta, &  \alpha \in \mathcal{D}-\mathcal{Z}. \\
\end{array}
\end{equation}

If the maximum is $\delta^*=0$, then there is no strictly positive solution. 
If the maximum $\delta^*$ is strictly positive, the solution exists and can be found through a standard algorithm based on the simplex method which yields a strictly positive probability table together with the value of $\delta^*$. \fr{We do not show the output table, because the linear programming problem in Eq.~\eqref{eq:ip} is only a compatibility check between the uniform margins and a given zero pattern.}
The algorithm above can be easily implemented with the \texttt{lpsolve} package, see \cite{lpSolve} and \cite{Buttrey2005}, in \texttt{R}\cite{Rcore}. 
An alternative version of the optimization problem can be stated using separation theorems like the Farkas lemma, see, e.g., \cite{Mangasarian1994}, but the refinement of the optimization problem is outside the scope of this work. 

The approach via \texttt{lpsolve} is quite efficient and allows finding a solution even for large binary tables. For example, the analysis for $d=12$ is performed in less than 1 sec.~on standard hardware. 
For illustrative purposes, we conclude with two examples in the case $d=4$. They are depicted in Fig.~\ref{fig4var}. In the upper panel, Fig.~\ref{fig4var} shows a table with 4 zeros on a two-dimensional sub-table as zero pattern. 
According to Prop.~\ref{prop:segmenti}, there is no strictly positive solution, and the linear programming algorithm yields a value of $\delta^*$=0.  
In the lower panel, Fig.~\ref{fig4var} shows a different zero pattern, which is a table with again 4 zeros but in different positions. In this case, there is a strictly positive solution, and the output of the optimization procedure is $\delta^*=0.07143$.

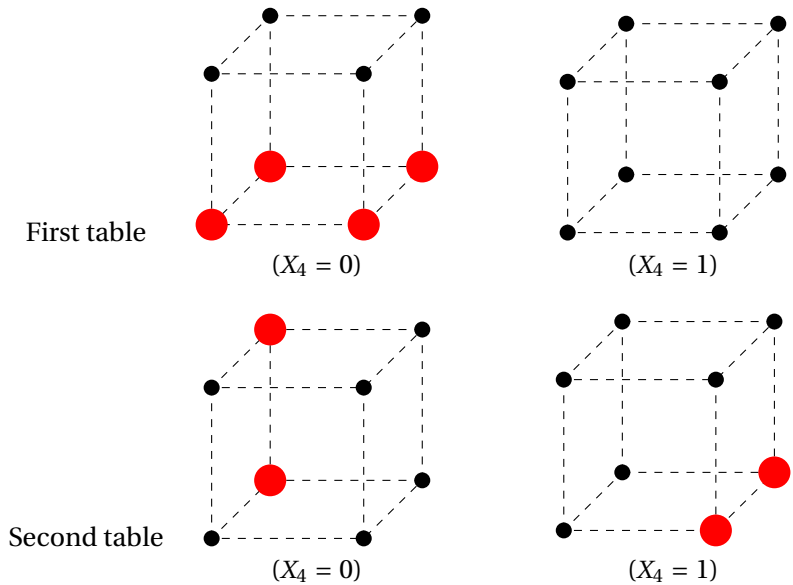
\begin{figure}
    \centering
    \begin{tabular}{cccc}
    First table &
        \begin{tikzpicture}[scale=2]
    \coordinate (1) at (0, 0, 0);
    \coordinate (2) at (1, 0, 0);
    \coordinate (3) at (0, 0, 1);
    \coordinate (4) at (1, 0, 1);
    \coordinate (5) at (0, 1, 0);
    \coordinate (6) at (1, 1, 0);
    \coordinate (7) at (0, 1, 1);
    \coordinate (8) at (1, 1, 1);

    \draw[dashed] (1) -- (2) -- (4) -- (3) -- cycle; 
    \draw[dashed] (5) -- (6) -- (8) -- (7) -- cycle; 
    \draw[dashed] (1) -- (5); 
    \draw[dashed] (2) -- (6);
    \draw[dashed] (3) -- (7);
    \draw[dashed] (4) -- (8);
    \foreach \point in {5, 6, 7, 8} {
        \fill[black] (\point) circle(1.5pt);
    }
    \foreach \point in {1,2,3,4} {
        \fill[red] (\point) circle(3pt);
    }
\end{tikzpicture}
& $ \qquad $ &
  \begin{tikzpicture}[scale=2]
    \coordinate (1) at (0, 0, 0);
    \coordinate (2) at (1, 0, 0);
    \coordinate (3) at (0, 0, 1);
    \coordinate (4) at (1, 0, 1);
    \coordinate (5) at (0, 1, 0);
    \coordinate (6) at (1, 1, 0);
    \coordinate (7) at (0, 1, 1);
    \coordinate (8) at (1, 1, 1);

    \draw[dashed] (1) -- (2) -- (4) -- (3) -- cycle; 
    \draw[dashed] (5) -- (6) -- (8) -- (7) -- cycle; 
    \draw[dashed] (1) -- (5); 
    \draw[dashed] (2) -- (6);
    \draw[dashed] (3) -- (7);
    \draw[dashed] (4) -- (8);
    \foreach \point in {1, 2, 3,4, 5, 6, 7, 8} {
        \fill[black] (\point) circle(1.5pt);
    }
    \foreach \point in {} {
        \fill[red] (\point) circle(3pt);
    }
\end{tikzpicture} \\
& ($X_4=0$) & & ($X_4=1$)  \\
 & & $ $ & \\
 Second table &
        \begin{tikzpicture}[scale=2]
    \coordinate (1) at (0, 0, 0);
    \coordinate (2) at (1, 0, 0);
    \coordinate (3) at (0, 0, 1);
    \coordinate (4) at (1, 0, 1);
    \coordinate (5) at (0, 1, 0);
    \coordinate (6) at (1, 1, 0);
    \coordinate (7) at (0, 1, 1);
    \coordinate (8) at (1, 1, 1);

    \draw[dashed] (1) -- (2) -- (4) -- (3) -- cycle; 
    \draw[dashed] (5) -- (6) -- (8) -- (7) -- cycle; 
    \draw[dashed] (1) -- (5); 
    \draw[dashed] (2) -- (6);
    \draw[dashed] (3) -- (7);
    \draw[dashed] (4) -- (8);
    \foreach \point in {2, 3, 4, 6, 7, 8} {
        \fill[black] (\point) circle(1.5pt);
    }
    \foreach \point in {1, 5} {
        \fill[red] (\point) circle(3pt);
    }
\end{tikzpicture}
& $ \qquad $ &
  \begin{tikzpicture}[scale=2]
    \coordinate (1) at (0, 0, 0);
    \coordinate (2) at (1, 0, 0);
    \coordinate (3) at (0, 0, 1);
    \coordinate (4) at (1, 0, 1);
    \coordinate (5) at (0, 1, 0);
    \coordinate (6) at (1, 1, 0);
    \coordinate (7) at (0, 1, 1);
    \coordinate (8) at (1, 1, 1);

    \draw[dashed] (1) -- (2) -- (4) -- (3) -- cycle; 
    \draw[dashed] (5) -- (6) -- (8) -- (7) -- cycle; 
    \draw[dashed] (1) -- (5); 
    \draw[dashed] (2) -- (6);
    \draw[dashed] (3) -- (7);
    \draw[dashed] (4) -- (8);
    \foreach \point in {1, 3, 5, 6, 7, 8} {
        \fill[black] (\point) circle(1.5pt);
    }
    \foreach \point in {2, 4} {
        \fill[red] (\point) circle(3pt);
    }
\end{tikzpicture} \\
& ($X_4=0$) & & ($X_4=1$)  \\
\end{tabular}
\caption{Two $2^4$ tables with zero patterns. The first table in the upper panel, the second table in the lower panel. The variable\rf{s} $X_1, X_2, X_3$ are the coordinates of the cubes, the variable $X_4$ divides the table into two cubes. Red dots denote zero probabilities. To ease readability, vertex labels are omitted.}
    \label{fig4var}
\end{figure}

In summary, a key advantage of the linear programming-based method is its computational efficiency. 
Conversely, the method based on extreme pmfs offers valuable insight into the structure of zero patterns that are compatible with the existence of tables with uniform margins.
The computation of extreme pmfs for the class of multivariate Bernoulli distributions with given margins, particularly in high dimensions, can also benefit from recent work (e.g., see \cite{fontana2024high}). 
We plan to further develop this method in the future by exploring this connection further.

\subsection{Examples in the $2^3$ case} \label{3wayex}

In this section, we show how some $2^3$ tables with only a few zeros can behave differently. 
Specifically, we give explicit configurations where \ep{the solution does not exist or it is not determined only by the conditional odds ratios}. 
For our goal, it is sufficient to analyze $2^3$ tables with one or two zeros, leading (up to isomorphism) to four possible configurations. 
We recall that, in our context, two tables are isomorphic if one can be obtained from the other by permuting the levels and relabeling the variables.

\begin{enumerate}
\item The table in Fig.~\ref{fig:conf}~(a) has one zero. In this configuration, there are still four independent equations from marginal constraints and three independent \ep{conditional} odd ratios equations from the system in Eq.~\eqref{orsys}. Thus, the number of equations is equal to the number of positive probabilities.

\item The table in Fig.~\ref{fig:conf}~(b) has two zero on the same edge, specifically in the entries $(0,1,0)$ and $(1,1,0)$. According to Prop.~\ref{prop:segmenti}, there is no solution unless we also force the two opposite cells in $(0,0,1)$ and $(1,0,1)$ to be zero.

\item The table in Fig.~\ref{fig:conf}~(c) has two zeros on the opposite cells of a face. In this configuration, there are four independent equations from marginal constraints and one independent conditional odds ratios equation from the system in Eq.~\eqref{orsys}. \ep{Since the positive probabilities are six, in this case the conditional odds-ratios in Eq.~\eqref{orsys} are not sufficient to determine the transformed table and higher order combinations of odds ratios should be considered.}

\item The table in Fig.~\ref{fig:conf}~(d) has two zeros on two different faces. \fr{Here, there are no valid constraints on the conditional odds ratios in Eq.~\eqref{orsys}, and the number of positive probabilities is two more than the number of marginal constraints.}
\end{enumerate}

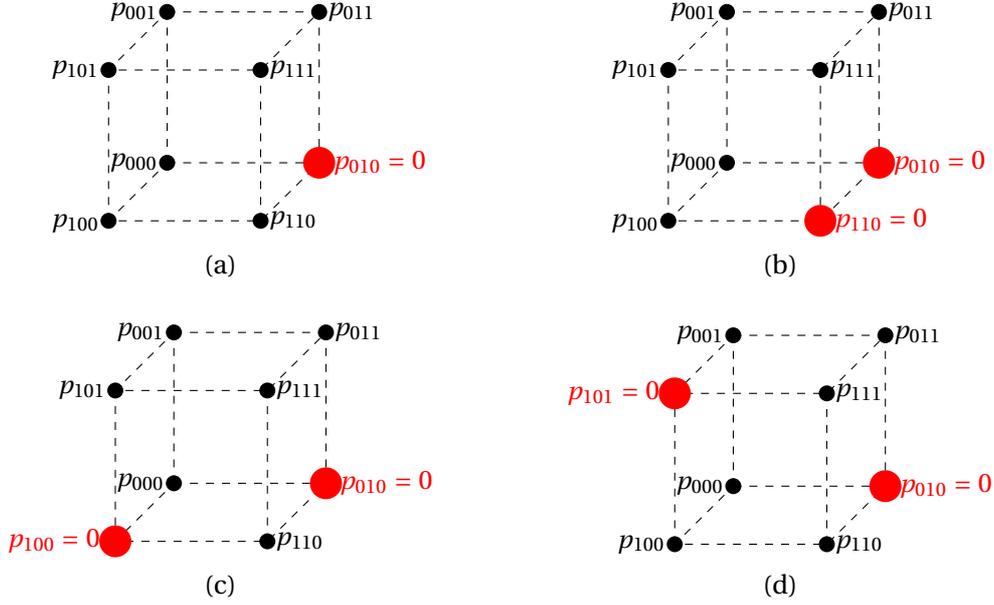
\begin{figure}
    \centering
    \begin{tabular}{ccc}
        \begin{tikzpicture}[scale=2]
    \coordinate (1) at (0, 0, 0);
    \coordinate (2) at (1, 0, 0);
    \coordinate (3) at (0, 0, 1);
    \coordinate (4) at (1, 0, 1);
    \coordinate (5) at (0, 1, 0);
    \coordinate (6) at (1, 1, 0);
    \coordinate (7) at (0, 1, 1);
    \coordinate (8) at (1, 1, 1);

    \draw[dashed] (1) -- (2) -- (4) -- (3) -- cycle; 
    \draw[dashed] (5) -- (6) -- (8) -- (7) -- cycle; 
    \draw[dashed] (1) -- (5); 
    \draw[dashed] (2) -- (6);
    \draw[dashed] (3) -- (7);
    \draw[dashed] (4) -- (8);
    \foreach \point in {1, 3, 4, 5, 6, 7, 8} {
        \fill[black] (\point) circle(1.5pt);
    }
    \foreach \point in {2} {
        \fill[red] (\point) circle(3pt);
    }

    \node[left] at (1) {$p_{000}$};
    \node[right] at (2) {$\ \color{red}{p_{010}=0}$};
    \node[left] at (3) {$\phantom{0=}p_{100}$};
    \node[right] at (4) {$p_{110}$};
    \node[left] at (5) {$p_{001}$};
    \node[right] at (6) {$p_{011}$};
    \node[left] at (7) {$p_{101}$};
    \node[right] at (8) {$p_{111}$};
\end{tikzpicture}
& $ \qquad $ &
  \begin{tikzpicture}[scale=2]
    \coordinate (1) at (0, 0, 0);
    \coordinate (2) at (1, 0, 0);
    \coordinate (3) at (0, 0, 1);
    \coordinate (4) at (1, 0, 1);
    \coordinate (5) at (0, 1, 0);
    \coordinate (6) at (1, 1, 0);
    \coordinate (7) at (0, 1, 1);
    \coordinate (8) at (1, 1, 1);

    \draw[dashed] (1) -- (2) -- (4) -- (3) -- cycle; 
    \draw[dashed] (5) -- (6) -- (8) -- (7) -- cycle; 
    \draw[dashed] (1) -- (5); 
    \draw[dashed] (2) -- (6);
    \draw[dashed] (3) -- (7);
    \draw[dashed] (4) -- (8);
    \foreach \point in {1, 3, 5, 6, 7, 8} {
        \fill[black] (\point) circle(1.5pt);
    }
    \foreach \point in {2, 4} {
        \fill[red] (\point) circle(3pt);
    }

    \node[left] at (1) {$p_{000}$};
    \node[right] at (2) {$\ \color{red}{p_{010}=0}$};
    \node[left] at (3) {$\phantom{0=}p_{100}$};
    \node[right] at (4) {$\ \color{red}{p_{110}=0}$};
    \node[left] at (5) {$p_{001}$};
    \node[right] at (6) {$p_{011}$};
    \node[left] at (7) {$p_{101}$};
    \node[right] at (8) {$p_{111}$};
\end{tikzpicture} \\
(a) & & (b)  \\
 &  $ $ & \\
        \begin{tikzpicture}[scale=2]
    \coordinate (1) at (0, 0, 0);
    \coordinate (2) at (1, 0, 0);
    \coordinate (3) at (0, 0, 1);
    \coordinate (4) at (1, 0, 1);
    \coordinate (5) at (0, 1, 0);
    \coordinate (6) at (1, 1, 0);
    \coordinate (7) at (0, 1, 1);
    \coordinate (8) at (1, 1, 1);

    \draw[dashed] (1) -- (2) -- (4) -- (3) -- cycle; 
    \draw[dashed] (5) -- (6) -- (8) -- (7) -- cycle; 
    \draw[dashed] (1) -- (5); 
    \draw[dashed] (2) -- (6);
    \draw[dashed] (3) -- (7);
    \draw[dashed] (4) -- (8);
    Aggiunta dei punti
    \foreach \point in {1, 4, 5, 6, 7, 8} {
        \fill[black] (\point) circle(1.5pt);
    }
    \foreach \point in {2, 3} {
        \fill[red] (\point) circle(3pt);
    }

    \node[left] at (1) {$p_{000}$};
    \node[right] at (2) {$\ \color{red}{p_{010}=0}$};
    \node[left] at (3) {$\color{red}{p_{100}=0} \ $};
    \node[right] at (4) {$p_{110}$};
    \node[left] at (5) {$p_{001}$};
    \node[right] at (6) {$p_{011}$};
    \node[left] at (7) {$p_{101}$};
    \node[right] at (8) {$p_{111}$};
\end{tikzpicture}
& $ \qquad $ &
  \begin{tikzpicture}[scale=2]
    \coordinate (1) at (0, 0, 0);
    \coordinate (2) at (1, 0, 0);
    \coordinate (3) at (0, 0, 1);
    \coordinate (4) at (1, 0, 1);
    \coordinate (5) at (0, 1, 0);
    \coordinate (6) at (1, 1, 0);
    \coordinate (7) at (0, 1, 1);
    \coordinate (8) at (1, 1, 1);

    \draw[dashed] (1) -- (2) -- (4) -- (3) -- cycle; 
    \draw[dashed] (5) -- (6) -- (8) -- (7) -- cycle; 
    \draw[dashed] (1) -- (5); 
    \draw[dashed] (2) -- (6);
    \draw[dashed] (3) -- (7);
    \draw[dashed] (4) -- (8);
    \foreach \point in {1, 3, 4, 5, 6, 8} {
        \fill[black] (\point) circle(1.5pt);
    }
    \foreach \point in {2, 7} {
        \fill[red] (\point) circle(3pt);
    }

    \node[left] at (1) {$p_{000} $};
    \node[right] at (2) {$\ \color{red}{p_{010}=0}$};
    \node[left] at (3) {$p_{100}$};
    \node[right] at (4) {$p_{110}$};
    \node[left] at (5) {$p_{001}$};
    \node[right] at (6) {$p_{011}$};
    \node[left] at (7) {$\color{red}{p_{101}=0} \ $};
    \node[right] at (8) {$p_{111}$};
\end{tikzpicture} \\
(c) & & (d) 
\end{tabular}
\caption{Configurations with 1 or 2 zeros in the $2^3$ table. Red dots denote zero probabilities. (a): one zero; (b): two zeros on the same edge; (c): two zeros in diagonal position on a face; (d): two zeros in different faces.}
    \label{fig:conf}
\end{figure}

\fr{A discussion on cases (c) and (d) above is provided with full detail in Sect.~\ref{Sec:zero-uniqueness}.}

\subsection{\rf{Extension of the approach to arbitrary $d$-way tables}} \label{dway}

\rf{It is worth noting that both the computational methods described in Sect.~\ref{sec:dwaytab} can be generalized to any $d$-way table, where each variable $X_i$ takes values in $\{0,1,\ldots,x_i-1\}$ for $x_i \geq 2$, with $i=1,\ldots,d$. 
For this, it is sufficient to modify $H_d$ and $C_d$ to express the constraints
\[
m_{i,0}=\ldots=m_{i,x_i-1}=\frac{1}{x_i},  \; i=1\ldots,d\, ,
\]
where $m_{i,j}=\sum_{\alpha \in \mathcal{D},\alpha_i=j} p_\alpha$, $i=1,\ldots,d, \; j=0,\ldots,x_i-1$, and $\mathcal{D}=\{0,\ldots,x_1-1\} \times \ldots \{0,\ldots,x_d-1\}$. We get
\[
m_{i,0}=\ldots=m_{i,x_i-1}=\frac{1}{x_i} \Leftrightarrow 
\begin{cases}
    m_{i,0}-m_{i,1}=0 \\
    \ldots \\
    m_{i,x_i-2}-m_{i,x_i-1}=0
\end{cases}, \; i=1,\ldots,d
\]
that can be rewritten as follows
\begin{equation} \label{eq:systemdway}
 (1_{\{\alpha \in \mathcal{D}, \alpha_i=j\}}-1_{\{\alpha \in \mathcal{D}, \alpha_i=j+1\}})^Tp=0, \;\;i=1\ldots,d, \,  j=0,\ldots,x_i-2.
\end{equation}
For example, in the case of a $3 \times 4$ table ($d=2, x_1=3$, and $x_2=4$) the $H_d$ matrix of Eq.(\ref{eq:HD}) becomes 
\[
H_{x_1,x_2}=
\begin{pmatrix}
1 & 1 & 1 & 1 & -1 & -1 & -1 & -1 & 0 & 0 & 0 & 0 \\
0 & 0 & 0 & 0 & 1 & 1 & 1 & 1 & -1 & -1 & -1 & -1 \\
1 & -1 & 0 & 0 & 1 & -1 & 0 & 0 & 1 & -1 & 0 & 0 \\
0 & 1 & -1 & 0 & 0 & 1 & -1 & 0 & 0 & 1 & -1 & 0 \\
0 & 0 & 1 & -1 & 0 & 0 & 1 & -1 & 0 & 0 & 1 & -1 
\end{pmatrix}
\]
}
\rf{Given a zero pattern $\mathcal{Z}$, we have two ways to check if a $\mathcal{Z}$ compatible table with uniform margins exists:
\begin{enumerate}
    \item using 4ti2 \cite{4ti2}, we compute the rays corresponding to the matrix $H_{x_1,x_2}$. We obtain $96$ extreme pmfs and then we proceed in a similar way as in Thm.~\ref{thm:extremerays};  
    \item we build the $C_{x_1,x_2}$ matrix, by simply adding a first row of 1's to $H_{x_1,x_2}$ and then we use the integer programming approach, as in \ref{eq:ip}. 
\end{enumerate}
\fr{We point out that the methodology described here formally solves the problem of existence for general $d$-way tables. However, in the multi-dimensional setting the geometric structure of the zero-patterns becomes complicated and a straightforward generalization of Thm.~\ref{th:Geenens_6.1} is hard to obtain if not unfeasible.}
}

\section{\ep{The role of conditional odds ratios in determining the unique transformed table}}
\label{Sec:zero-uniqueness}
\ep{In the previous section, we focused on the existence of the solution. We first investigate in which cases the set of conditional odds ratios is sufficient to describe the IPFP solution without the addition of higher order combinations of odds ratios. We start with conditional odds ratios due to their simple interpretation in terms of local dependence. Our analysis goes along the same line as the two-way table example presented in Sec.~\ref{Sec:background}. Here, we analyze two scenarios in the three-dimensional case, to show that more complex equations play a role to guarantee uniqueness.}

\ep{
\begin{table}[b]
	\centering
		\begin{tabular}{c|rrrrrrrr|rrr}
pmf & $000$ & $001$ & $010$ & $011$ & $100$ & $101$ & $110$ & $111$ & $\omega_M^{12}$ &
$\omega_M^{13}$ & $\omega_M^{23}$\\
\hline
$p_0$ & $0.1$ & $0.05$ & $0.3$ & $0.2$ & $0.1$ & $0.05$ & $0.15$ & $0.05$ & $0.4$ & $0.64$ & $1.111$ \\  
$p_1$ & $0.09$ & $0.09$ & $0.14$ & $0.18$ & $0.16$ & $0.16$ & $0.11$ & $0.07$ & $0.357$ & $0.714$ &  $1.033$ \\
\end{tabular}
	\caption{\rf{Marginal odds ratios of $p_0$ (input) and $p_1$ (output obtained using IPFP), all values rounded to 3 decimal places}}
	\label{tab:marg01}
\end{table}
First, it is worth noting that the odds ratios considered are the \emph{conditional} odds ratios and not the \emph{marginal} odds ratios, which are in general not preserved by the IPFP algorithm. We show this phenomenon in Table \ref{tab:marg01} through a specific example. There, the marginal odds ratios (denoted by $\omega_M^{12}$, $\omega_M^{13}$, and $\omega_M^{23}$) of $p_0$ (the input pmf) and $p_1$ (the output pmf obtained using IPFP) are different, meaning that, in general, the marginal odds ratios are not preserved in the transformation.
%
}

\rf{Let us consider a pmf $p_0$ with a zero pattern compatible with the existence of a table with uniform margins. Given $p_0$ as input, IPFP produces as output $p_1$, a pmf with uniform margins that has the same odds ratio structure of $p_0$. As previously discussed, it is known that $p_1$ is necessarily the I-projection of $p_0$ onto the class of $d$-variate Bernoulli distributions with uniform margins, and that such a I-projection is unique (Theorem 6.2 in \cite{geenens2020copula} or Proposition 2.1 in \cite{kojadinovic2024}).  
In this section, we consider $3$-dimensional binary tables and highlight the roles of the conditional odds ratios in determining the I-projection.
}

\begin{example} \label{ex:2zeros}
\rf{We consider a $2^3$ table with zeros in the entries $(0,1,0)$ and $(1,0,0)$, that is $p_{010}=p_{100}=0$ (Fig.~\ref{fig:conf}(c)). It can be verified that this zero-pattern is compatible with the existence of a table with uniform margins.  
In this case, we have only one non-trivial condition on the conditional odds ratio, precisely $\omega_{1}^{12}$, i.e. the odds ratio of $X_1,X_2$ given $X_3=1$: 
\begin{equation} \label{onlyone}
\omega_{1}^{12}=\frac {p_{001}p_{111}} {{p_{011}p_{101}}}.
\end{equation}
Additionally, we have four independent conditions for the margins, $C_3(\mathcal{Z})p=b_3$ as defined in Sec.~\ref{sec:dwaytab}.
Again using the lexicographic order for the entries, the kernel of the model matrix $C_3(\mathcal{Z})$ is generated by the two vectors:
\[
(-1,1,0,0,1,-1), \qquad (0,1,-1,-1,0,1).
\]
Given a particular solution $\tilde p$, for instance, the solution of the IPFP which is strictly positive, the general solution to the problem with fixed margins is given by
\begin{multline} \label{gendistr}
\left(
p_{000}=\tilde{p}_{000}-\alpha, p_{001}=\tilde{p}_{001}+\alpha+\beta, p_{011}=\tilde{p}_{011}-\beta, \right. \\
\left. p_{101}=\tilde{p}_{101}-\beta , p_{110}=\tilde{p}_{110}+\alpha, p_{111}=\tilde{p}_{111}-\alpha+\beta
\right)   
\end{multline}
for suitable values of $(\alpha,\beta)$, i.e. $\{(\alpha,\beta): p_{ijk} \geq 0, i,j,k \in \{0,1\}\}$. This distribution is shown in Fig.~\ref{fig:nonuni}.
By substitution of the generic probability distribution in Eq.~\eqref{gendistr} into the odds ratio equation in Eq.~\eqref{onlyone}, we get:
\begin{equation}\label{alphabeta}
f(\alpha,\beta)=(\tilde{p}_{111}-\tilde{p}_{001})\alpha +(\tilde{p}_{111}+\tilde{p}_{001}+\omega_{1}^{12}(\tilde{p}_{011}+\tilde{p}_{101}))\beta -\alpha^2+(1-\omega_{1}^{12})\beta^2=0\, .
\end{equation}}
\begin{figure}[b!]
    \centering
\begin{tikzpicture}[scale=2.2]
    \coordinate (1) at (0, 0, 0);
    \coordinate (2) at (1, 0, 0);
    \coordinate (3) at (0, 0, 1);
    \coordinate (4) at (1, 0, 1);
    \coordinate (5) at (0, 1, 0);
    \coordinate (6) at (1, 1, 0);
    \coordinate (7) at (0, 1, 1);
    \coordinate (8) at (1, 1, 1);

    \draw[dashed] (1) -- (2) -- (4) -- (3) -- cycle; 
    \draw[dashed] (5) -- (6) -- (8) -- (7) -- cycle; 
    \draw[dashed] (1) -- (5); 
    \draw[dashed] (2) -- (6);
    \draw[dashed] (3) -- (7);
    \draw[dashed] (4) -- (8);
    \foreach \point in {1, 4, 5, 6, 7, 8} {
        \fill[black] (\point) circle(1pt);
    }
    \foreach \point in {2, 3} {
        \fill[red] (\point) circle(2pt);
    }

    \node[left] at (1) {$\tilde{p}_{000}+\alpha\phantom{x} $};
    \node[right] at (2) {$\phantom{x} \color{red}{\tilde{p}_{010}=0}$};
    \node[left] at (3) {$\color{red}{\tilde{p}_{100}=0\phantom{x}} \ $};
    \node[right] at (4) {$\phantom{x}\tilde{p}_{110}-\alpha$\ };
    \node[left] at (5) {$\tilde{p}_{001}-\alpha+\beta\phantom{x}$\ };
    \node[right] at (6) {$\phantom{x}\tilde{p}_{011}-\beta$\ };
    \node[left] at (7) {$\tilde{p}_{101}-\beta\phantom{x}$\ };
    \node[right] at (8) {$\phantom{x}\tilde{p}_{111}+\alpha+\beta$\ };
\end{tikzpicture}
\caption{The general probability table in the $2^3$ case with two zeros in the diagonal of a face.}
    \label{fig:nonuni}
\end{figure}
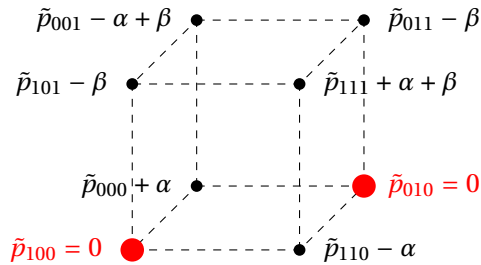
\rf{The equation above introduces an implicit function $\beta(\alpha)$, whose explicit expression is somewhat complicated and not relevant for our application, having quadratic terms in both $\alpha$ and $\beta$. 
However, we can see that, at least locally, we have infinite solutions in a neighborhood of the point $(0,0)$.
The point $(0,0)$ is a solution because $\tilde{p}$ is the probability distribution given by the IPFP. Since $\tilde{p}$ is strictly positive, there is a neighborhood of $(0,0)$ in the strictly positive simplex of the probabilities. The local behavior of $\beta(\alpha)$ can be studied by means of the implicit functions theorem. The partial derivatives are
\[
\frac{\partial f}{\partial \alpha} = \tilde{p}_{111}-\tilde{p}_{001}-2\alpha, \qquad \frac{\partial f}{\partial \beta} = \tilde{p}_{111}+\tilde{p}_{001}+\omega_{1}^{12}(\tilde{p}_{011}+\tilde{p}_{101})+2(1-\omega_{1}^{12})\beta \, ,
\]
leading to
\[
\beta(0)=0, \qquad \beta'(0) = - \frac {\tilde{p}_{111}-\tilde{p}_{001}} {\tilde{p}_{111}+\tilde{p}_{001}+\omega_{1}^{12}(\tilde{p}_{011}+\tilde{p}_{101})} \, .
\]
A special case arises for conditional independence, i.e., $\omega_{1}^{12}=1$ where the function in Eq.~\eqref{alphabeta} reduces to
\[
f(\alpha,\beta) = (\tilde{p}_{111}-\tilde{p}_{001})\alpha +\frac 1 2 \beta - \alpha^2 = 0
\]
from which
\[
\beta(\alpha) = 2\alpha(\alpha-(\tilde{p}_{111}-\tilde{p}_{001})) \, .
\]
We consider a numerical example, shown in Table \ref{tab:multiodd}, where we report the original $p_0$ and the corresponding $p_1$ found using IPFP. 
Then, we determine one solution $(\alpha_\star,\beta_\star)$ of Eq.~\eqref{alphabeta} and build $p_1'$ according to Eq.~\eqref{gendistr}. We can verify that $p_1'$ has uniform margin and has the same value of the conditional odds-ratio $\omega_{1}^{12}$ of $p_0$ and $p_1$. 
In this case, to understand why the solution is unique we need to compute combinations of odds ratios symbolically defined as the product of two conditional odds-ratios. More specifically, we consider the symbolic product of $\omega^{13}_0$ and $\omega^{23}_1$ that we denote by $\omega_{01}^{13,23}$ and obtain  
\[
\omega_{01}^{13,23}=\frac{p_{000}p_{111}}{p_{001}p_{110}}
\]
Looking at Table \ref{tab:multiodd}, we observe that $\omega_{01}^{13,23}$ of $p_1'$ is different to those of $p_0$ and $p_1$. It can also be verified that by adding the condition 
\[
\omega_{01}^{13,23}(\alpha,\beta)=\omega_{01}^{13,23}(p_0) \Leftrightarrow \omega_{01}^{13,23}(\alpha,\beta)=1
\]
to Eq.~\eqref{alphabeta}, we obtain $\alpha=\beta=0$ as the unique solution, which means that $p_1'$ must be chosen equal to $p_1$.
}

\begin{table}[t]
	\centering
		\begin{tabular}{c|rrrrrrrr|rr}
pmf & $000$ & $001$ & $010$ & $011$ & $100$ & $101$ & $110$ & $111$ & $\omega_{1}^{12}$ &
$\omega_{01}^{13,23}$ \\
\hline
$p_0$ & $0.288$ &  $0.106$ & $0$ & $0.106$ & $0$ & $0.106$ & $0.288$ & $0.106$ & $1$ & $1$ \\
$p_1$ & $0.250$ & $0.125$ & $0$ & $0.125$ & $0$ & $0.125$ & $0.250$ & $0.125$ & $1$ & $1$ \\ 
$p_1'$ & $0.240$ & $0.135$ & $0$ & $0.125$ & $0$ & $0.125$ & $0.260$ & $0.115$ & $1$ & $0.787$ \\ 
\end{tabular}
	\caption{Conditional odds-ratios of $p_0$ (input), $p_1$ (output obtained using IPFP), and $p'_1$ (all values rounded to 3 decimal places)}
	\label{tab:multiodd}
\end{table}
\end{example}

\begin{example} \label{ex:3zeros}
\rf{We consider the zero-pattern $\{(0,1,1),(1,0,1),(1,1,0)\}$, that is $p_{011}=p_{101}=p_{110}=0$. It can be verified that in this case, the polytope of the pmf with uniform margins is 
\begin{equation} \label{eq:lambda}
\{p=\lambda r_1 + (1-\lambda) r_6: 0 \leq \lambda \leq 1 \}    
\end{equation}
where $r_1$ and $r_6$ are the extreme pmfs defined in Table \ref{tab:ex3}. We consider an initial $p_0$ and find $p_1$ as the output of IPFP. Then, taking $\lambda=.9$ for illustrative purposes, we get $p'_1=0.9 r_1+0.1 r_6$ (see Table~\ref{tab:odd3m}).}
\begin{table}[b]
	\centering
		\begin{tabular}{c|rrrrrrrr|r}
pmf & $000$ & $001$ & $010$ & $011$ & $100$ & $101$ & $110$ & $111$ & $\omega^{12,13,23}_{001}$ \\
\hline
$p_0$ &  $0.40$ & $0.15$ & $0.15$ & $0$ & $0.15$ & $0$ & $0$ & $0.15$ & $7.111$\\
$p_1$ &  $0.225$ & $0.137$ & $0.137$ & $0$ & $0.137$ & $0$ & $0$ & $0.363$ & $7.111$\\
$p'_1$ &  $0.45$ & $0.025$ & $0.025$ & $0$ & $0.025$ & $0$ & $0$ & $0.475$ & $6156$\\ 
\end{tabular}
	\caption{\rf{$\omega^{12,13,23}_{001}$ odds-ratio of $p_0$ (input), $p_1$ (output obtained using IPFP), and $p'_1$ (all the values are rounded to 3 decimal places)}}
	\label{tab:odd3m}
\end{table}
\rf{It is possible to verify that, in this case, all conditional odds-ratios and all the products $\omega_{kk'}^{ij,i'j'}$, which are symbolically defined as the product $\omega^{ij}_k \omega^{i'j'}_{k'}$ are trivial. In this case, to retrieve the unique solution, we need to consider three-term products of conditional odds ratios like the following
\begin{equation} \label{e3or3}
\omega^{12,13,23}_{001}=\frac{ p_{000}^2 p_{111} }{ p_{010} p_{100} p_{001}}    
\end{equation}
where $\omega^{12,13,23}_{001}$ is the symbolic product of the conditional odds-ratios $\omega^{12}_0$, $\omega^{13}_0$, and $\omega^{23}_1$. Considering Table \ref{tab:odd3m}, we observe that $\omega^{12,13,23}_{001}$ has the same value for $p_0$ and $p_1$ but not for $p_1'$. In particular using Eq.~\eqref{eq:lambda} in Eq.~\eqref{e3or3} we obtain
\begin{equation} \label{eq:wlambda}
 \omega^{12,13,23}_{001}=\frac{ \lambda^2 (1+\lambda) }{ (1-\lambda)^3}    
\end{equation}
It is easy to verify that Eq.~\eqref{eq:wlambda} is invertible for $0 \leq \lambda < 1$ and this provides a further justification of the uniqueness of $p_1$ for this zero-pattern.}
\end{example}

\rf{We conclude this section by considering all possible $2^{2^d}=256$ zero patterns (of course the $8$-zero-pattern that results in a trivial all-zero table is excluded by the analysis). Using Thm.~\ref{thm:extremerays} and the extreme points listed in Table~\ref{tab:ex3z}, we find $45$ zero patterns compatible with the existence of a table with uniform margins. For each compatible zero pattern $\mathcal{Z}=(z_1,\ldots,z_8)$ we compute the extreme rays of the polytope $\mathcal{P}_{\mathcal{Z}}$ defined by $H_3p=0$ and $p_{\alpha_i}=0$ if $z_i=0$, $i=1,\ldots,8$. We denote by $N_0$ and $N_1$ the number of zeros in the zero pattern $\mathcal{Z}$ and the number of extreme points of the polytope $\mathcal{P}_{\mathcal{Z}}$, respectively.} 

\rf{Table \ref{tab:n0en1} reports the classification of the $45$ compatible zero patterns by $N_0$ and $N_1$. There are $6$ cases where $N_1=1$. For each of these zero patterns, there is only one table with uniform margins. For the remaining $39$ cases we compute the odds ratios, obtaining:
\begin{enumerate}
    \item for the zero pattern with $N_0 \in \{0,1,2,4\}$ zeros, the uniquess of the solution is determined by the non-trivial odds ratios which are conditional odds ratios and/or products of two conditional odds ratios, like in Example \ref{ex:2zeros};
    \item for the $3$-zero patterns the uniquess of the solution is determined by non-trivial odds ratios which are the products of three conditional odds ratios, like in Example \ref{ex:3zeros}.
\end{enumerate}
}
\begin{table}[t]
	\centering
\begin{tabular}{c|rrrrr|r}
& \multicolumn{5}{c|}{$N_1$} & \\
$N_0$      & 1 & 2  & 3  & 4 & 6 & Total \\
\hline
0     & 0 & 0  & 0  & 0 & 1 & 1     \\
1     & 0 & 0  & 0  & 8 & 0 & 8     \\
2     & 0 & 0  & 16 & 0 & 0 & 16    \\
3     & 0 & 8  & 0  & 0 & 0 & 8     \\
4     & 2 & 6  & 0  & 0 & 0 & 8     \\
6     & 4 & 0  & 0  & 0 & 0 & 4     \\
\hline
Total & 6 & 14 & 16 & 8 & 1 & 45   
\end{tabular} 
\caption{\rf{Cross classification of tables with uniform margins by $N_0$ and $N_1$, $d=3$.}}
\label{tab:n0en1}
\end{table}

\rf{The analysis of $3$-dimensional tables show how conditional odds ratios and their products play a role in determining the IPFP solution when it exists.}

\section{Conclusions}
\label{Sec:discussion}
In this paper, we \ep{provide a characterization of the zero-patterns that are compatible with a transformed multi-way binary table with prescribed dependence structure in terms of odds ratios. 
We further study how the zero-pattern impacts the relevance of conditional odds ratios in determining the unique solution of the problem.}
While working on this project, we identified interesting research directions for future investigations. 
%
\ep{For binary tables, the approach based on extreme rays described in Sec.~\ref{Sec:zero-existence} would benefit from a full characterization of the extreme rays to avoid the computational step.}
Some advances in this direction have recently been made in the context of multivariate Bernoulli distributions \cite{fontana2024high}. The possible adaptation of these results in the framework of polytopes of discrete copulas and higher order transportation polytopes is worth investigating. 
With these new insights, it would be possible to prove if the condition on the opposite facets of Prop.~\ref{prop:segmenti} could be generalized to multi-way tables with an arbitrary number of levels.
Moreover, it would be interesting to characterize the class solutions when \ep{conditional odds ratios are not enough to determine the transformed table } in terms of the associated algebraic variety. 
An algebraic approach would make it possible to identify extreme tables in this variety and study if there are valuable alternatives with respect to the solution provided by the IPFP. 
Finally, we could investigate how the approach adapts to different dependence conditions. 
For instance, we might consider suitable marginal odds ratios to have a closer connection with the classical log-linear models of independence and conditional independence for multi-way contingency tables.

\smallskip
\section*{Acknowledgments}
We are grateful to two anonymous reviewers for their comments which led to a substantial improvement of the article. Elisa Perrone thanks Jane Coons, Kaie Kubjas, Dorota Kurowicka, and Steffen Lauritzen for fruitful discussions on this work.

\bibliographystyle{elsarticle-num} 
\bibliography{references.bib}

\end{document}